\documentclass[12pt]{article}

\usepackage[T1]{fontenc}
\usepackage[left=25mm, right=25mm, top=25mm, bottom=25mm]{geometry}
\usepackage{amssymb, amsmath, amscd, amsthm, verbatim, 
	mathrsfs, tikz, color , amsfonts}
\usepackage{cite}
\usepackage{setspace} 
\allowdisplaybreaks
\usepackage[vcentermath, enableskew, noautoscale]{youngtab}
\usepackage{young}

\usepackage{hyperref}

\usepackage{bm}
\usepackage{enumerate}
\usepackage{graphicx}
\usepackage{mathptmx}
\usepackage{enumitem} 
\usepackage{marvosym}    
\usepackage{hyphenat}

\newtheorem{remark}{Remark}[section]
\newtheorem{theorem}{Therorem}[section]
\newtheorem{definition}{Definition}[section]
\newtheorem{lemma}{Lemma}[section]

\title{\textbf{Bi-shadowing of Quasi-semi-hyperbolic Pseudo-orbit}}
\author{
    Yan He$^{\href{mailto:hy2365446496@163.com}{\textnormal{\Letter}}}$
    \thanks{Email: {202306021013@stu.cqu.edu.cn}}, 
    Meihua  Jin$^{\href{mailto:202206021016@stu.cqu.edu.cn}{\textrm{\Letter}}}$
 \thanks{Corresponding author, Email:202206021016@stu.cqu.edu.cn}
    \\
    \small \textit{College of Mathematics and Statistics, Chongqing University, Chongqing 401331, China}
}

\date{}

\begin{document}
	\hbadness=10000
	\hfuzz=0.2pt  
	\overfullrule=0pt  
	\bibliographystyle{plain}
	\maketitle
	\renewcommand{\abstractname}{}
	\begin{abstract}
		\noindent\textbf{Abstract:} In this paper, we introduce the
		concept of quasi-semi hyperbolic pseudo-orbits and prove that
		quasi-semi hyperbolicity implies quasi hyperbolicity provided the
		error magnitude are sufficiently small. We also have successively
		demonstrated that both finite quasi-hyperbolic pseudo-orbits and
		infinite quasi-semi hyperbolic pseudo-orbits possess the
		bi-shadowing property, and thus we establish the periodicity.\\
		\noindent\textbf{Keywords:} Quasi-semi-hyperbolic system;
		Bi-shadowing property; Semi-hyperbolicity; Fixed point theorem.
	\end{abstract}

\section{Introduction}

\hspace{1em}Differential dynamical systems mainly focus on the stability and variation characteristics exhibited by the system under disturbances. Among them, the invariant properties include $ \Omega $-stability and structural stability, while shadowing property is an important indicator for describing the stability of the system. The concept of pseudoorbit is widely applied in various fields of power systems, especially playing a core role in qualitative analysis. The pseudoorbit shadowing theory is an important tool in the study of dynamical systems and also the theoretical basis of computer numerical simulation. It is particularly important in analyzing the stability of differential dynamical systems, and also provides profound insights for exploring the topological properties, traversal properties and statistical characteristics of the system.

Historically, Anosov\cite{DA} and Bowen\cite{RB} first proposed the shadowing lemma for general diffeomorphisms: that is, if $ \Lambda $ is a hyperbolic set of diffeomorphisms $ f $, then there exists a neighborhood $ U $ of $ \Lambda $ such that $ f $ has pseudoorbit shadowing properties on $ U $. It marks the beginning of shadowing research in power systems. Later, Eirola, Nevanlinna and Pilyugin investigated the limit shadowing property of differential homeomorphisms and the $ L^{P} $ shadowing properties of hyperbolic sets\cite{TO}. Hirsch investigated the shadowing property\cite{MH}, and Pilyugin proved that structurally stable differential homeomorphisms possess Lipschitz pseudoorbit shadowing property\cite{SP}. In many situations, because of the existence of singularities, the results on flows are not the trivial translations of ones on discrete case(diffeomorphisms) (e.g. see \cite{SG,SD}). Building upon the research insights of Liao \cite{LS} on the shadowing lemma for quasi-hyperbolic orbit arcs,  Gan established a generalized shadowing lemma\cite{SG}. Zhang and Zhou extended the concept of quasi-hyperbolic pseudoorbits to quasi-partially hyperbolic pseudoorbits, and established quasi-shadowing properties and limit quasi-shadowing properties \cite{ZF}. 

Compared to autonomous discrete dynamical systems, non-autonomous discrete dynamical systems exhibit more complex dynamical behavior. Currently, academic research focuses on multiple aspects, including but not limited to chaotic properties, topological entropy, and sensitivity in non-autonomous dynamical systems. For example, Anosov families
provide a framework where robust analytical techniques have been developed to study these systems(see \cite{LB,JM,WC} and references therein).

The concepts of semi-hyperbolicity and bi-shadowing were first cited and proved by Diamond, Kloeden \cite{PD1,PD2,PD3}. The main result established is that semi-hyperbolicity is sufficient to guarantee that a dynamical system generated by a Lipschitz map possesses the bi-shadowing property with respect to a class of perturbed systems generated by continuous maps. Mazur et al. indicated that for $ C^{1} $
diffeomorphism, the semi-hyperbolicity of invariant sets implies its hyperbolicity, and provided some exact estimates of hyperbolic constants by semi-hyperbolic constants \cite{MM}.
Besides, Chen and Zhou proved that the semi-partially hyperbolic families also possesses the bi-quasi-shadowing property.the existence of bi-quasi-shadowing property\cite{LC}

In this paper, we extend the notion of quasi-hyperbolic pseudo-orbits to a more general setting by introducing the concepts of quasi-semi hyperbolic pseudo-orbits. We demonstrate that a quasi-semi hyperbolic pseudo-orbit is quasi hyperbolic  provided that the errors in the pseudo-orbit are sufficiently small. By integrating and extending the methodologies established in \cite{LC,PD3,HZ,ZF}, we prove that these pseudo-orbits possess the bi-shadowing property and periodic bi-Shadowing. 

The main results of this paper are Theorem \ref{th:2.1}, Theorem \ref{th:3.1}, Theorem \ref{th:4.1} and Theorem \ref{th:4.2}.
The paper is organized as follows. In Section \ref{Section 2}, we give the concept of quasi-semi hyperbolic pseudoorbits and an important Theorem \ref{th:2.1}. The results is that quasi-semi hyperbolicity implies quasi hyperbolicity under suitable conditions. In Section \ref{Section 3},  our main result is that quasi-semi-hyperbolic pseudoorbit have the finite bi-shadowing property. In Section \ref{Section 4}, we demonstrate that infinite quasi-semi-hyperbolic pseudoorbit have the finite bi-shadowing property and periodicity.

\section{Quasi-semi hyperbolicity implies quasi hyperbolicity}\label{Section 2}

Everywhere in this paper, we assume that $ M $ is a smooth m-dimensional compact Riemannian
manifold without boundary and denote by Diff($ M $) the set of all $ C^1 $ diffeomorphisms on $ M $. We denote by $ |\cdot| $ and $ \rho (\cdot, \cdot) $ the norm on $ TM $ and the distance on $ M $ induced by the Riemannian metric respectively.

Since $ M $ is compact, we can take a constant $ \varepsilon_0 > 0 $  such that for any $ x\in M$, the standard exponential mapping $\exp_x:\{v\in T_{x}M:|v|<\varepsilon_0\} \rightarrow M $ is a $ C^\infty $ diffeomorphism to its image. Clearly, $ \rho (x, \exp_x(v))=|v| $ for $ v\in T_{x}M $ with $ |v|<\varepsilon_0 $.

Now we give the definition of quasi-semi-hyperbolic orbit segment and quasi-semi-hyperbolic pseudo-orbit.

\begin{definition}\label{defn:2.1}
	Let $f \in\operatorname{Diff}(M)$. We say an orbit arc $ \{x,n\}=\{x,fx,f^2x,\cdots,f^nx\} $ is quasi-semi hyperbolic if following conditions are satisfied:\\
	(1) there are splittings $T_{f^jx}M=E^u_{f^jx}\oplus E^s_{f^jx}, 0\leqslant j\leqslant n$, and $D_{f^jx}f$ is represented by
	$$ \left(\begin{array}{lll}
		A_{j}&B_{j}\\
		C_{j}&D_{j}
	\end{array}\right),\ 0\leqslant j\leqslant n-1;$$
	(2) there exist positive numbers $ \varepsilon $ and $\lambda\in (0,1) $ such that
	\begin{equation}\label{2.1}
		\begin{aligned}
			&\quad \prod_{j=0}^{k-1} \left\|D_{j}\right\|\leqslant \lambda^{k} &&\quad  k=1,2,\cdots,n,\\
			&\quad \prod_{j=k}^{n-1} m(A_{j})\geqslant \lambda^{k-n} &&\quad  k=0,1,\cdots,n-1,\\
			&\quad \frac{\left\|D_{j}\right\|}{m(A_{j})}\leqslant\lambda^{2}  &&\quad j=0,1,\cdots,n-1,\\
			&\quad \left\|B_{j}\right\|,\left\|C_{j}\right\|\leqslant \varepsilon && \quad j=0,1,\cdots,n-1. \\
		\end{aligned}
	\end{equation}
\end{definition}
here $ m(\cdot) $ is the minimum norm.

In above definition, we also say that $\{x,n\}$ is $(\lambda,\varepsilon)$-quasi-semi hyperbolic with respect to the splitting $T_{\boldsymbol f^jx}M=E^u_{f^jx}\oplus E^s_{f^jx}, 0\leqslant j\leqslant n$.

\begin{definition}\label{defn:2.2}
	\hbadness=10000
	\hfuzz=5pt 
	\overfullrule=0pt
	Let $f \in \operatorname{Diff}(M)$. An orbit segment  $\{x ,n\}=\{x,fx,f^2x,\cdots,f^nx\}$ is quasi-hyperbolic, if it is quasi-semi hyperbolic and the corresponding  splitting 
	$T_{ f^jx}M=E^u_{ f^jx}\oplus E^s_{ f^jx}, 0\leqslant j\leqslant n$ is $Df$-invariant, i.e., $B_{j},C_{j}=0$ for all $0\leqslant j\leqslant n-1$.
\end{definition}

Let $\{x_{i}\}_{i=-\infty}^{+\infty}$ be a (two-sided) sequence points in $ M $ and $\{n_{i}\}_{i=-\infty}^{+\infty}$ be a (two-sided) sequence of positive integers. Denote by $$ \{x_{i},n_{i}\}_{i=-\infty}^{+\infty}=\{f^{j}x_{i}:0\leqslant j\leqslant n_{i}-1\}.$$

\begin{definition}\label{defn:2.3}
	Let $f \in \operatorname{Diff}(M),\lambda,\varepsilon\in (0,1)$. 
	$ \{x_{i},n_{i}\} $ is called a $ (\lambda,\varepsilon,\delta) $-quasi-semi hyperbolic pseudo orbit with respect to $T_{x_{i}} M=E_{x_{i}}^{u}  \oplus E_{x_{i}}^{s}$ if for any $ i $, $ \{x_{i},n_{i}\} $ is $ (\lambda,\varepsilon) $-quasi-semi hyperbolic with respect to $T_{x_{i}} M=E_{x_{i}}^{u}  \oplus E_{x_{i}}^{s}$ and $ \rho(f^{n_{i}}x_{i},x_{i+1})\leqslant \delta $.
	Moreover, $ \{x_{i},n_{i}\} $ is $ (\lambda,\delta) $-quasi hyperbolic pseudo-orbit if $\varepsilon=0$.
\end{definition}
If we denote  $  y_{j}=f^{j-N_{i}}x_{i}$ for $ N_{i}\leqslant j\leqslant N_{i+1}-1$, from the definition above, we see that $y_{j+1}=f_{j}(y_{j})$ if $j=N_{i}, \cdots, N_{i+1}-2$,
where $ N_{i} $ is defined as
$$
N_{i}=\left\{\begin{array}{ll}
	0, & \text { if } i=0 \\
	n_{0} +n_{1}+\cdots+n_{i-1}, & \text { if } i>0\\
	n_{i}+n_{i+1}+\cdots+n_{-1}, & \text { if } i<0
\end{array}\right.
$$

For a subset $\Lambda\subset M$ and a continuous splitting $T_{\Lambda}M=E^u \oplus E^s$, a norm $|\cdot |_B$ on $ T_{\Lambda}M $  is said to be of box type​ (with respect to the splitting) if
$$ |v |_B=\max\{|v^u|, |v^s| \}, \ \forall v\in T_{\Lambda}M,$$
where $|\cdot|$ is the norm reduced by the Riemannian metric.
\begin{remark}\label{rem:2.1}
	If a finite orbit segment $\{x ,n\}=\{x,fx,f^2x,\cdots,f^nx\}$ is quasi-semi-hyperbolic with respect to the original Riemannian norm, then it remains quasi-semi-hyperbolic with respect to any equivalent box-type norm(see the proof of Theorem 2.1 in \cite{LC}).
\end{remark}

Now we give a main result which establishes that a quasi-semi-hyperbolic pseudo-orbit is actually quasi-hyperbolic with sufficiently small errors.

\begin{theorem}\label{th:2.1}
	Let $f \in \operatorname{Diff}(M),0 < \lambda < \widetilde{\lambda} < 1 $, and let $TM=E^u \oplus E^s$ be a continuous splitting. Then there exist two positive numbers $\varepsilon_0$ and $\delta_0$ such that for any positive number $\varepsilon\leqslant\varepsilon_0 $ and $\delta\leqslant\delta_0$, every $(\lambda,\varepsilon,\delta)$-quasi-semi hyperbolic pseudo-orbit $ \{x_{i},n_{i}\}_{i=-\infty}^{\infty} $ is actually  $(\widetilde{\lambda},\delta)$-quasi hyperbolic with respect to some splitting.
\end{theorem}
\begin{proof}{\em Step 1.} We first construct two numbers $\varepsilon_1,\delta_1$ and give some notations. 
	
	Denote by $ R=\sup_{x\in M}\{\left\|Df_{x}\right\|,\left\|Df_{x}^{-1}\right\|\} $ and choose $\lambda_{0}\in(\lambda,\widetilde{\lambda}), $ and set 
	$$
	\varepsilon_1=\min\{\frac{1-\lambda_{0}^{2}}{(\lambda_{0}^{2}+6)R},\frac{\lambda_{0}-\lambda}{\lambda_{0}R}\}.
	$$
	
	For $f \in \operatorname{Diff}(M)$, there exists $\delta_1 > 0$ such that if $x_i,x_{i+1} \in M$ satisfying $\rho(f(x_i), x_{i+1}) \leqslant \delta_1$, then\begin{equation}\label{2.2}|D_0(\exp_{x_{i+1}}^{-1} \circ f \circ \exp_{x_i}) - D_{x_i}f| \leqslant \varepsilon_1.\end{equation}
	
	Suppose that $ \{x_{i},n_{i}\}_{i=-\infty}^{\infty} $ is a $ (\lambda,\varepsilon,\delta) $-quasi-semi hyperbolic pseudo-orbit with $ \varepsilon\leqslant\varepsilon_1 $ and $ \delta\leqslant\delta_1 $. Denote the points $  y_{j}=f^{j-N_{i}}x_{i}$ for $ N_{i}\leqslant j\leqslant N_{i+1}-1 .$ With respect to the splitting $ T_{y_{j}}M=E^{u} \oplus E^{s} $, write the derivative matrices as
	$$
	L_{j}=D_{y_{j}}f=
	\begin{pmatrix}
		A_{j}&B_{j}\\
		C_{j}&D_{j}
	\end{pmatrix}
	$$ 
	，
	$$
	\widetilde{L}_{j}=D_{0}(\text{exp}_{y_{j+1}}^{-1}\circ f \circ \text{exp}_{y_{j}})=
	\begin{pmatrix}
		\widetilde{A}_{j}&\widetilde{B}_{j}\\
		\widetilde{C}_{j}&\widetilde{D}_{j}
	\end{pmatrix},\quad j\in \mathbb Z.
	$$
	
	{\em Step 2.} In this step, we construct the $\widetilde{L}$-invariant subspaces $G^u $ and $G^s$. Without loss of generality, we assume throughout this step that the norm is of the  box type.
	
	First, we construct the unstable subspace $G^u$, which satisfies the following expansion condition:
	$$
	\ m(\widetilde{L}_{j}|_{G_{j}^{u}})\geqslant m(A_{j})- 3\varepsilon_1,\quad\forall j\in \mathbb Z.
	$$
	
	Denote by $ L(E^{u},E^{s}) $ the Banach space of linear maps from $ E^{u} $ to $ E^{s} $ and by $ L(E^{u},E^{s})(1) $ the closed unit ball about the origin. 
	
	For  $ P\in\ L(E^{u},E^{s}) $ and  $ j\in \mathbb Z $,
	$$
	\begin{pmatrix}
		\widetilde{A}_{j}&\widetilde{B}_{j}\\
		\widetilde{C}_{j}&\widetilde{D}_{j}
	\end{pmatrix}
	\begin{pmatrix}
		v\\
		P_{j}v
	\end{pmatrix}
	=
	\begin{pmatrix}
		\widetilde{A}_{j}v+\widetilde{B}_{j} P_{j}v\\
		\widetilde{C}_{j}v+\widetilde{D}_{j} P_{j}v
	\end{pmatrix}
	$$
	hence
	$$
	\widetilde{L}_{j}(\text{gr}(P_{j}))\subset \text{gr}(P_{j+1})
	$$
	if and only if
	$$
	P_{j+1}(\widetilde{A}_{j}v+\widetilde{B}_{j} P_{j}v)=\widetilde{C}_{j}v+\widetilde{D}_{j} P_{j}v.
	$$
	
	Since $\{x_{i}, n_{i}\}_{i=-\infty}^{\infty}$ is $(\lambda, \varepsilon)$-quasi-semi hyperbolic, by (\ref{2.1}) we have $ \left\|B_{j}\right\|,\left\|C_{j}\right\|\leqslant \varepsilon\leqslant\varepsilon_1, $. Then by (\ref{2.2}) We immediately obtain
	\begin{equation}\label{2.3}\left\|\widetilde{B}_{j}\right\|,\left\|\widetilde{C}_{j}\right\| \leqslant 2\varepsilon_1, \quad  j \in \mathbb Z.
	\end{equation}
	then
	$$
	m(\widetilde{A}_{j})\geqslant \lambda_{0}^{-2}\left\|\widetilde{D}_{j}\right\|>2\varepsilon_{1}\geqslant\left\|\widetilde{B}_{j}\right\| \geqslant\ \left\|\widetilde{B}_{j}P_{j}\right\|,
	$$
	hence $$  \widetilde{A}_{j}+\widetilde{B}_{j}P_{j}:E^{u}\rightarrow E^{s} $$ is invertible by the Lipschitz inverse function Theorem(Theorem 2.7 of \cite{LW}).
	
	Define a map:
	$$
	T:L(E^{u},E^{s})(1) \rightarrow L(E^{u},E^{s}),
	$$
	$$
	((T_{P}))_{j+1}=(\widetilde{C}_{j}+\widetilde{D}_{j}P_{j})(\widetilde{A}_{j}+\widetilde{B}_{j}P_{j})^{-1}.
	$$
	
	The problem of finding a linear map $ P $ such that $ \widetilde{L}(\text{gr}(P)) \subset \text{gr}(P) $ is equivalent to finding a fixed point of the operator  $ T $.
	
	By the choosing $ \varepsilon_1\leqslant \frac{(1-\lambda_{0}^{2})}{(\lambda_{0}^{2}+6)R} $ and (\ref{2.2}),(\ref{2.3}), then for any  $ P\in\ L(E^{u},E^{s})(1) $,
	\begin{equation}
		\begin{aligned}
			\left\|(T(P))_{j+1}\right\|&
			\leqslant||\widetilde{C}_{j}+\widetilde{D}_{j}P_{j}||\cdot||(\widetilde{A}_{j}+\widetilde{B}_{j}P_{j})^{-1}|| \\
			&\leqslant\frac{\left\|\widetilde{D}_{j}\right\|+2\varepsilon_1}{m(\widetilde{A}_{j})-2\varepsilon_1} \\
			&\leqslant\frac{\lambda_{0}^{2}(m(A_{j})+\varepsilon_1)+2\varepsilon_{1}}{m(A_{j})-3\varepsilon_1}, \\ 
			&<1, \quad \forall j\in \mathbb Z.\\ 
		\end{aligned}
		\nonumber
	\end{equation}
	
	Hence $ T $ maps $  L(E^{u},E^{s})(1) $ into itself.
	
	Moreover, for any  $ P,P^{'}\in L(E^{u},E^{s})(1) $, we have
	$$
	(T(P))_{j+1}(\widetilde{A}_{j}+\widetilde{B}_{j}P_{j})=\widetilde{C}_{j}+\widetilde{D}_{j}P_{j},
	$$
	$$
	(T(P^{'}))_{j+1}(\widetilde{A}_{j}+\widetilde{B}_{j}P_{j}^{'})=\widetilde{C}_{j}+\widetilde{D}_{j}P_{j}^{'}.
	$$
	Thus
	\begin{align*}
		&((T(P))_{j+1}-(T(P'))_{j+1})\widetilde{A}_{j}
		+(T(P))_{j+1}\widetilde{B}_{j}P_{j}
		-(T(P'))_{j+1}\widetilde{B}_{j}P_{j} \\
		&+(T(P'))_{j+1}\widetilde{B}_{j}P_{j}
		-(T(P'))_{j+1}\widetilde{B}_{j}P_{j'}
		= \widetilde{D}_{j}(P_{j}-P_{j'}),
	\end{align*}
	$$
	((T(P))_{j+1}-(T(P^{'}))_{j+1})(\widetilde{A}_{j}+\widetilde{B}_{j}P_{j})=(\widetilde{D}_{j}-(T(P^{'}))_{j+1}\widetilde{B}_{j})(P_{j}-P_{j}^{'}),
	$$
	$$
	(T(P))_{j+1}-(T(P^{'}))_{j+1}=(\widetilde{D}_{j}-(T(P^{'}))_{j+1}\widetilde{B}_{j})(P_{j}-P_{j}^{'})(\widetilde{A}_{j}+\widetilde{B}_{j}P_{j})^{-1}.
	$$
	Therefore,
	\begin{equation}
		\begin{aligned}
			\left\|(T(P))_{j+1}-(T(P^{'}))_{j+1}\right\|&
			\leqslant||\widetilde{D}_{j}-(T(P^{'}))_{j+1}\widetilde{B}_{j}||\cdot||(\widetilde{A}_{j}+\widetilde{B}_{j}P_{j})^{-1}||\cdot\left\|P_{j}-P_{j}^{'}\right\|\\
			&\leqslant\frac{\lambda_{0}^{2}(m(A_{j})+\varepsilon_1)+2\varepsilon_1}{m(A_{j})-3\varepsilon_1}\left\|P_{j}-P_{j}^{'}\right\| \\
			&<\left\|P_{j}-P_{j}^{'}\right\|, \quad\forall j\in \mathbb Z\\
		\end{aligned}
		\nonumber
	\end{equation}
	
	Thus $ T $ is a contraction on the complete metric space $ L(E^{u},E^{s})(1) $. By the contraction mapping principle, $ T $ has a unique fixed point $ P\in L(E^{u},E^{s})(1) $  such that
	$$
	\widetilde{L}_{j}(\text{gr}(P_{j}))\subset \text{gr}(P_{j+1}), \quad \forall j\in \mathbb Z.
	$$
	In global notation, this invariance property is written as, $ \widetilde{L}(\text{gr}(P))\subset \text{gr}(P) $. Since each map $ \widetilde{L}:T_{x}M \rightarrow T_{x}M $ is a linear isomorphism,
	the inclusion is actually an equality
	$$
	\widetilde{L}(\text{gr}(P))= \text{gr}(P).
	$$
	Then we define the unstable subspace by $G_j^u = \operatorname{gr}(P_j)$, then $ \{G_j^{u}\}_{j\in \mathbb Z} $ is invariant under $ \widetilde{L}_{j} $.
	
	Recall that the norm on each tangent space is taken to be of box type since  $ P\in\ L(E^{u},E^{s})(1) $, the norm of a vector in $ \textbf{gr}(P) $ is simply the norm of its first component. Consequently, for any $ v\in G_{j}^{u}  ,$
	$$
	\left\|\widetilde{L}_{j}(v,P_{j}v)\right\|=\left\|\widetilde{A}_{j}v+\widetilde{B}_{j}P_{j}v\right\|\geqslant (m(\widetilde{A}_{j})- 2\varepsilon_1 ) \left\|v\right\|\geqslant (m(A_{j})- 3\varepsilon_1 ) \left\|v\right\|.
	$$
	This establishes that the restriction of $ \widetilde{L}_{j} $ to the unstable subspace $G_{j}^u$ satisfies the required expansion condition:
	$$
	m(\widetilde{L}_{j}|_{G_{j}^{u}})\geqslant m(A_{j})- 3\varepsilon_1, \quad \forall j \in \mathbb{Z}.
	$$
	
	Similarly, by the same graph‑transform， we can get invariant family of graphs $ Q=\{Q_j\} $ with $ Q_j\in\ L(E^{s},E^{u})(1) $. Define the stable subspace by $ G_j^s = \operatorname{gr}(Q_j)$, then we construct the $\widetilde{L}$-invariant subspace $G^s$ which satisfy the following estimates:$$ \|\widetilde{L}_j|_{G^s_j}\| \leqslant \|D_{j}\| + 3\varepsilon_1, \quad \forall j \in \mathbb{Z} .$$ Thus we have produced a $\widetilde{L}$-invariant decomposition $T     M = G^u \oplus G^s$. With respect to this splitting the pulled‑back derivative takes a block‑diagonal form:
	$$
	F_{j}=\begin{pmatrix}
		M_{j}&0\\
		0&N_{j}
	\end{pmatrix},\quad j\in\mathbb{Z}.
	$$ 
	
	{\em Step 3.} Complete the proof of
	Theorem \ref{th:2.1}. 
	
	Since the construction of the invariant decomposition relies on the fixed‑point method. This guarantees continuous dependence on the parameters, from which it follows that there exist two numbers, $\varepsilon_0 \leqslant \varepsilon_1$ and $\delta _0 \leqslant \delta_1$, such that the new splitting $G^u \oplus G^s$ is sufficiently close to the original splitting $E^u \oplus E^s$. Hence, for $j \in \mathbb{Z}$, we have:
	$$
	\left\|M_{j}-\widetilde{A}_{j}\right\|\leqslant \frac{\widetilde{\lambda}-\lambda_{0}}{\widetilde{\lambda}R}.
	$$ 
	$$
	\left\|N_{j}-\widetilde{D}_{j}\right\|\leqslant \frac{\widetilde{\lambda}-\lambda_{0}}{\widetilde{\lambda}R}.
	$$ 
	Then we obtain
	\begin{equation}\label{2.4}\frac{\lambda_{0}}{\widetilde{\lambda}}m(\widetilde{A}_{j})\leqslant m(M_{j}) \leqslant \left\|M_{j}\right\|\leqslant \frac{\widetilde{\lambda}}{\lambda_{0}}\left\|\widetilde{A}_{j}\right\|.
	\end{equation}
	\begin{equation}\label{2.5}\frac{\lambda_{0}}{\widetilde{\lambda}}m(\widetilde{D}_{j})\leqslant m(N_{j}) \leqslant \left\|N_{j}\right\|\leqslant \frac{\widetilde{\lambda}}{\lambda_{0}}\left\|\widetilde{D}_{j}\right\|.
	\end{equation}
	Since $ \varepsilon_1\leqslant\frac{\lambda_{0}-\lambda}{\lambda_{0}R}\leqslant\frac{\lambda_{0}-\lambda}{\lambda R} $, one has 
	$$
	\left\|\widetilde{D}_{j}\right\|\leqslant \left\|D_{j}\right\|+\varepsilon_1\leqslant \frac{\lambda_{0}}{\lambda}\left\|D_{j}\right\|,j \in \mathbb Z
	$$
	$$
	m(\widetilde{A}_{j})\geqslant m(A_{j})-\varepsilon_1\geqslant\frac{\lambda}{\lambda_{0}}m(A_{j}),j \in \mathbb Z.
	$$ 
	Then, for $j=N_{i},N_{i}+1,\cdots,N_{i+1}-1$, 
	$$
	\frac{\left\|\widetilde{D}_{j}\right\|}{m(\widetilde{A}_{j})}\leqslant\frac{\lambda_{0}^{2}\left\|D_{j}\right\|}{\lambda^{2}m(A_{j})}\leqslant \frac{\lambda_{0}^{2}}{\lambda^{2}} \cdot \lambda^{2} =\lambda_{0}^{2},
	$$ 
	By (\ref{2.4}) and (\ref{2.5}) , we have 
	\begin{equation}\label{2.6}
		\frac{\left\|N_{j}\right\|}{m(M_{j})}\leqslant \frac{\frac{\widetilde{\lambda}}{\lambda_{0}}\left\|\widetilde{D}_{j}\right\|}{\frac{\lambda_{0}}{\widetilde{\lambda}}m(\widetilde{A}_{j})} \leqslant \frac{\widetilde{\lambda}^{2}}{\lambda_{0}^{2}}\cdot \lambda_{0}^{2}= \widetilde{\lambda}^{2}.
	\end{equation}
	
	For $h=N_{i}+1,N_{i}+2,\cdots,N_{i+1}$,
	$$
	\prod_{j=N_{i}}^{h-1} \left\|\widetilde{D}_{j}\right\|\leqslant \prod_{j=N_{i}}^{h-1} \frac{\lambda_{0}}{\lambda}\left\|D_{j}\right\| \leqslant (\frac{\lambda_{0}}{\lambda})^{h-N_{i}}\cdot\lambda^{h-N_{i}}=\lambda_{0}^{h-N_{i}}.
	$$ 
	Then, by (\ref{2.4}), we have 
	\begin{equation}\label{2.7}
		\prod_{j=N_{i}}^{h-1} \left\|N_{j}\right\|\leqslant \prod_{j=N_{i}}^{h-1} \frac{\widetilde{\lambda}}{\lambda_{0}}\left\|\widetilde{D}_{j}\right\| \leqslant (\frac{\widetilde{\lambda}}{\lambda_{0}})^{h-N_{i}}\cdot\lambda_{0}^{h-N_{i}}=\widetilde{\lambda}^{h-N_{i}}.
	\end{equation}
	
	For $h=N_{i},N_{i}+1,\cdots,N_{i+1}-1$,
	$$
	\prod_{j=h}^{N_{i+1}-1} m(\widetilde{A}_{j})\geqslant \prod_{j=h}^{N_{i+1}-1} \frac{\lambda}{\lambda_{0}}m(A_{j})\geqslant(\frac{\lambda}{\lambda_{0}})^{N_{i+1}-h}\cdot\lambda^{h-N_{i+1}}=\lambda_{0}^{h-N_{i+1}}.
	$$ 
	Then, by (\ref{2.4}), we have 
	\begin{equation}\label{2.8}
		\prod_{j=h}^{N_{i+1}-1} m(M_{j})\geqslant \prod_{j=h}^{N_{i+1}-1} \frac{\lambda_{0}}{\widetilde{\lambda}}m(\widetilde{A}_{j}) \geqslant (\frac{\lambda_{0}}{\widetilde{\lambda}})^{N_{i+1}-h}\cdot\lambda_{0}^{h-N_{i+1}}=\widetilde{\lambda}^{h-N_{i+1}}.
	\end{equation}
	Therefore, from (\ref{2.6}),(\ref{2.7}),(\ref{2.8}), we conclude that $ \{x_{i},n_{i}\}_{i=-\infty}^{\infty} $ is actually  $(\widetilde{\lambda},\delta)$-quasi hyperbolic with respect to splittings $T_{f^{j-N_{i}}x}M=E^u_{f^{j-N_{i}}x}\oplus E^s_{f^{j-N_{i}}x} $ for $ N_{i}\leqslant j\leqslant N_{i+1}-1 $ and satisfies
	\begin{equation*}
		\begin{alignedat}{2}
			&\quad \prod_{j=N_{i}}^{h-1} \left\|N_{j}\right\|\leqslant \widetilde{\lambda}^{h-N_{i}}, && \quad h=N_{i}+1,N_{i}+2,\cdots,N_{i+1},  \\
			&\quad \prod_{j=h}^{N_{i+1}-1} m(M_{j})\geqslant \widetilde{\lambda}^{h-N_{i+1}}, &&\quad h=N_{i},N_{i}+1,\cdots,N_{i+1}-1, \\
			&\quad \frac{\left\|N_{j}\right\|}{m(M_{j})}\leqslant\widetilde{\lambda}^{2}, &&\quad j=N_{i},N_{i}+1,\cdots,N_{i+1}-1,\\ 
		\end{alignedat}
	\end{equation*}
	This completes the proof.
\end{proof}

\section{Bi-shadowing of Finite Quasi-semi-hyperbolic pseudo-orbit}\label{Section 3}
\hbadness=10000
\hfuzz=15pt 
\overfullrule=0pt 
In this section, we will show that finite quasi-semi-hyperbolic pseudo-orbit has the finite bi-shadowing property. First, we recall some concepts and lemmas of \cite{SG} which will be used to prove Theorem \ref{th:3.1}.

\begin{definition}

Let $ \lambda\in (0,1) $. A pair of sequences $\{a_{i},b_{i}\}_{i=1}^{n}  $ of positive numbers is called $ \lambda$-hyperbolic if:
$$
a_{k}\leqslant\lambda,\quad b_{k}\geqslant \lambda^{-1} \quad \  k=1,2,...,n.
$$
A pair of sequences $\{a_{i},b_{i}\}_{i=1}^{n}  $ of positive numbers is called $ \lambda $-quasi-hyperbolic if the following three conditions are satisfied:\\
(1) $ \quad \prod_{j=1}^{k} a_{j}\leqslant \lambda^{k}  $,\\
(2) $ \quad \prod_{j=k}^{n} b_{j}\geqslant \lambda^{k-n-1}  $,\\
(3) $ \quad \frac{b_{k}}{a_{k}}\leqslant\lambda^{2}   $,\\                                                                                                                                                                                                                                                                                                                                                                                                                                                                                                                                                                                                                                                                                                                                                                                                                                                                                                                                                                                                                                                                                                                                                                                                                                                                                                                                                                                                                                                                                                                                                                                                                                                                                                                                                                                                                                                                                                                                                                                                                                                                                                                                                                                                                                                                                                                                                                                                                                                                                                                                                                                                                                                                                                                                                                                                                                                                                                                                                                                                                                                                                                                                                                                                                                                                                                                                                                                                                                                                                                                                                                                                                                                                                                                                                                                                                                                                                                                                                                                                                                                                                                                                                                                                                                                                                                                                                                                                                                                                                                                                                                                                                                                                                                                                                                                                                                                                                                                                                                                                                                                                                                                                                                                                                                                                                                                                                                                                                                                                                                                                                                                                                                                                                                                                                                                                                                                                                                                                                                                                                                                                                                                                                                                                                                                                                                                                                                                                                                                                                                                                                                                                                                                                                                                                                                                                                                                                       
for $\ k=1,2,...,n.  $
\end{definition}

\begin{definition}
	A sequence $\{c_i\}_{i=1}^n$ of positive numbers is
	called a balance sequence if
	$$
	\quad \prod_{j=1}^k c_j\leqslant 1, k=1,2,...,n-1,\quad
	\prod_{j=1}^n c_j=1.
	$$
	
	A balance sequence $\{c_i\}_{i=1}^n$ is called adapted to
	$\lambda$-quasi-hyperbolic sequence $\{a_i,b_i\}_{i=1}^n$
	if
	\begin{align*}
		\left\{\frac{a_i}{c_i},\frac{b_i}{c_i}\right\}_{i=1}^n
	\end{align*}
	is still $\lambda$-quasi-hyperbolic. Moreover, if
	\begin{align*}
		\left\{\frac{a_i}{c_i},\frac{b_i}{c_i}\right\}_{i=1}^n
	\end{align*}
	is $\lambda$-hyperbolic, then $\{c_i\}_{i=1}^n$ is called
	well-adapted.
\end{definition}
\begin{lemma}\label{lem:3.1}
Given $ \lambda\in (0,1) $, any $ \lambda $-quasi-hyperbolic pair of sequences $ \{a_{i},b_{i}\}_{i=1}^{n}  $ has a well-adapted sequence $\{c_{i}\}_{i=1}^{n} $.
\end{lemma}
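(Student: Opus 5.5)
The plan is to build the well-adapted sequence through its partial products. Throughout, condition (3) is read in its intended form $a_k/b_k\leqslant\lambda^{2}$, i.e.\ $a_k\leqslant \lambda^{2}b_k$; this matches the third inequality of (\ref{2.1}), whereas the statement as printed has the ratio inverted. Under this reading, $\{c_i\}$ is well-adapted exactly when
$$\frac{a_k}{\lambda}\leqslant c_k\leqslant \lambda b_k,\qquad k=1,\dots,n,$$
together with the balance conditions. The interval $[a_k/\lambda,\lambda b_k]$ is nonempty precisely because $a_k\leqslant\lambda^{2}b_k$.

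Set $P_k=\prod_{j=1}^k c_j$, with $P_0=1$. Then we need $P_k\leqslant 1$ for all $k$, $P_n=1$, and
$$\frac{a_k}{\lambda}\leqslant \frac{P_k}{P_{k-1}}\leqslant\lambda b_k .$$
The data supply two natural comparison sequences:
$$\alpha_k=\lambda^{-k}\prod_{j=1}^k a_j \ \ (\alpha_0=1), \qquad \beta_k=\lambda^{\,n-k+1}\prod_{j=k}^n b_j \ \ (\beta_{n+1}=1).$$
By (1) we have $\alpha_k\leqslant1$, and by (2) we have $\beta_k\geqslant1$. They satisfy $\alpha_k=\alpha_{k-1}\,a_k/\lambda$ and $\beta_{k+1}^{-1}=\beta_k^{-1}\,\lambda b_k$. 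I would define
$$P_k=\max\{\alpha_k,\ \beta_{k+1}^{-1}\},\qquad k=0,1,\dots,n,$$
and then $c_k=P_k/P_{k-1}$.

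First I check the balance conditions. We get $P_k\leqslant1$ because both entries are $\leqslant1$. At the endpoints, $P_0=\max\{1,\beta_1^{-1}\}=1$ and $P_n=\max\{\alpha_n,1\}=1$. So $\{c_i\}$ is a balance sequence.

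The main step is the two-sided ratio bound, and it uses $a_k/\lambda\leqslant\lambda b_k$ in both directions. For the lower bound, write $P_k=\max\{\alpha_{k-1}a_k/\lambda,\ \beta_k^{-1}\lambda b_k\}$. Since $\lambda b_k\geqslant a_k/\lambda$, this gives $P_k\geqslant \frac{a_k}{\lambda}\max\{\alpha_{k-1},\beta_k^{-1}\}=\frac{a_k}{\lambda}P_{k-1}$. For the upper bound, since $a_k/\lambda\leqslant\lambda b_k$, we get $P_k\leqslant \lambda b_k\max\{\alpha_{k-1},\beta_k^{-1}\}=\lambda b_k P_{k-1}$.

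Finally, $\{a_i/c_i,b_i/c_i\}$ is $\lambda$-hyperbolic, since $a_k/c_k\leqslant\lambda$ and $b_k/c_k\geqslant\lambda^{-1}$. It is also still $\lambda$-quasi-hyperbolic. The products satisfy $\prod_{j\leqslant k}a_j/c_j\leqslant\lambda^k$ and $\prod_{j\geqslant k}b_j/c_j\geqslant\lambda^{k-n-1}$ termwise. The ratios $a_k/b_k$ are unchanged by dividing both by $c_k$. Hence $\{c_i\}$ is well-adapted.

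The only delicate point is the choice of $P_k$, which must interpolate between the forward constraint from (1) and the backward constraint from (2). Once the max formula is chosen, every verification is a one-line inequality.
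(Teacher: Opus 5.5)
Your proof is correct and complete. The paper has no argument to compare it with: Lemma \ref{lem:3.1} is only recalled from Gan's paper \cite{SG} and used as a black box in the proof of Theorem \ref{th:3.1}, so your explicit construction gives a self-contained proof.

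You are right to read condition (3) as $a_k/b_k\leqslant\lambda^2$. This matches the third line of (\ref{2.1}) (with $a_j=\|D_j\|$, $b_j=m(A_j)$), and Remark \ref{rem:3.1} requires it, since $a_i/\lambda\leqslant c_i\leqslant\lambda b_i$ forces $a_i\leqslant\lambda^2 b_i$. With the ratio as printed, combine (1), (2) at $k=1$, and $b_k\leqslant\lambda^2 a_k$. This gives $\lambda^{-n}\leqslant\prod_{j=1}^n b_j\leqslant\lambda^{3n}$, so no $\lambda$-quasi-hyperbolic pair would exist and the lemma would be vacuous.

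One observation would strengthen the write-up. Your choice $P_k=\max\{\alpha_k,\beta_{k+1}^{-1}\}$ is the smallest admissible choice, not merely a workable one. Any well-adapted sequence satisfies $P_k\geqslant\prod_{j\leqslant k}(a_j/\lambda)=\alpha_k$ and $1=P_n\leqslant P_k\prod_{j>k}\lambda b_j=P_k\beta_{k+1}$, so both entries of your maximum are forced lower bounds. Your argument therefore shows that these necessary bounds are also sufficient. Conditions (1) and (2) are precisely $\alpha_k\leqslant 1$ and $\beta_{k+1}^{-1}\leqslant 1$, which make the minimal candidate a balance sequence. Condition (3) is what lets the maximum of the two envelopes respect both ratio bounds.
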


\begin{remark}\label{rem:3.1}
If $\{c_{i}\}_{i=1}^{n} $ is a well-adapted sequence of $ \{a_{i},b_{i}\}_{i=1}^{n} $, then we have $ \frac{a_{i}}{c_{i}}\leqslant\lambda $ and $ \frac{b_{i}}{c_{i}}\geqslant\lambda^{-1} $. Hence $ a_{i}<\frac{a_{i}}{\lambda}\leqslant c_{i}\leqslant b_{i}\lambda<b_{i},i=1,2...,n. $
\end{remark}

\begin{definition}\relpenalty=0
	\binoppenalty=0
	\setlength{\emergencystretch}{5em}
	\hbadness=10000
	\hfuzz=20pt
	\overfullrule=0pt
	A sequence  $\{c_{i}\}_{i=1}^{n}  $ of positive numbers is called  a balance sequence if
	\begin{equation*}
	\quad \prod_{j=1}^{k} c_{j}\leqslant 1, k=1,2,...,n-1,\quad \prod_{j=1}^{n} c_{j}=1.	\end{equation*}
	A balance sequence $\{c_{i}\}_{i=1}^{n}  $ is called adapted to a $ \lambda$-quasi-hyperbolic sequence $ \{a_{i},b_{i}\}_{i=1}^{n}  $  if $\{\frac{a_{i}}{c_{i}},\frac{b_{i}}{c_{i}}\}_{i=1}^{n}  $ is still $ \lambda $-quasi-hyperbolic. Moreover, if $\{\frac{a_{i}}{c_{i}},\frac{b_{i}}{c_{i}}\}_{i=1}^{n}  $ is $ \lambda $-hyperbolic, then $\{c_{i}\}_{i=1}^{n}  $ is called well-adapted.
\end{definition}

\begin{theorem}\label{th:3.1}
Let $f \in \operatorname{Diff}(M),0 < \lambda < 1 $, and let $TM=E^u \oplus E^s$ be a continuous splitting. For given $ k,\varepsilon_1>0 $ Then there exist three positive numbers   $\delta_0, \varepsilon_0$ and $d_0$ with the following property: For any $g \in \operatorname{Diff}(M)$ satisfying $ \|f - g\|_\infty\leqslant d \leqslant d_0 $​, and for any  $(\lambda,\varepsilon,\delta)$-quasi-semi hyperbolic pseudo-orbit $  \{x_{i},n_{i}\}_{i=-k}^{k} $ of $ f $ with $ \delta\leqslant\delta_0,\varepsilon\leqslant\varepsilon_0 $, there exists a point $ x\in M $ that $\varepsilon_1 $-bi-shadows $ \{x_{i},n_{i}\}_{i=-k}^{k}, $ i.e., $ \rho(g^j x,f^{j-N_{i}}x_{i})\leqslant\varepsilon_1 $ for $ N_{i}\leqslant j\leqslant N_{i+1}-1 $.
\end{theorem}
\vspace{1em}

\begin{proof}
	{\em Step 1.} We begin by constructing the three positive numbers  $\delta_0, \varepsilon_0$ and $d_0$ specified in the theorem and giving some notations.
	
	For a given $ i\in\lbrack -k,k \rbrack $, define the points $$  y_{j}=f^{j-N_{i}}x_{i}\quad \text{for}\quad N_{i}\leqslant j\leqslant N_{i+1}-1 .$$
	Denote by $$ X_{j}=T_{y_{j}}M,E_{j}^{u}=E_{y_{j}}^{u},E_{j}^{s}=E_{y_{j}}^{s},R=\sup_{x\in M}\{\left\|Df_{x}\right\|,\left\|Df_{x}^{-1}\right\|\}. $$
	
	First, choose a positive constant $\widetilde{\lambda} \in (\lambda, \frac{1+\lambda}{2})$. Define $$\varepsilon_0 = \frac{1 + \lambda - 2\widetilde{\lambda}}{4R}. $$

	For $f \in \operatorname{Diff}(M)$, there exists $\eta\in(0,\varepsilon_1) $ such that for every $ y_j \in M $ satisfying:
	$$ |D_\xi(\exp_{f(y_{j})}^{-1} \circ f \circ \exp_{y_j}) - D_{y_j}f| \leqslant \frac{\widetilde{\lambda} - \lambda}{5R}.\quad \forall \xi \in X_{j}(\eta). $$
	Furthermore, for each $j \in \mathbb{Z}$, the uniform continuity guarantees the existence of a number $\delta_{1} \in (0, \frac{1-\widetilde{\lambda}}{4}\eta)$ such that if $y_j,y_{j+1} \in M$  satisfy $d(f(y_j), y_{j+1}) \leqslant \delta_{1}$, then:
	\begin{equation}\label{3.1}\left| D_\xi (\exp_{f(y_{j})}^{-1} \circ f \circ \exp_{y_j}) - D_0 (\exp_{y_{j+1}}^{-1} \circ f\circ \exp_{y_j}) \right| \leqslant \frac{\widetilde{\lambda} - \lambda}{4R}, \quad \forall \xi \in X_{j}(\eta),
	\end{equation}
	and
	\begin{equation}\label{3.2}\left| D_0 (\exp_{y_{j+1}}^{-1} \circ f \circ \exp_{y_j}) - D_{y_j} f\right| \leqslant \frac{\widetilde{\lambda} - \lambda}{4R}.
	\end{equation}
	
	For fixed $ k $, let $$ a=\max_{i\in [-k,k]}\{n_{i}\} $$ and then denote $ C=R^{a} $. Finally,  set  $$\delta_0 = \frac{\delta_{1} }{C},\quad d_0 = \frac{1-\widetilde{\lambda}}{4C}\eta .$$
	
	Let $ 0<\delta\leqslant\delta_0,0<\varepsilon\leqslant\varepsilon_0  $ and consider $ (\lambda,\varepsilon,\delta) $-quasi-semi hyperbolic pseudoorbit $ \{x_{i},n_{i}\}_{i=-k}^{k} $. Without loss of generality, we may assume that $ i\in [0,k] $. Relative to the splitting $ TM=E^{u} \oplus E^{s} $, the derivative of the local representation of $f$ at the points $  y_{j} $ is given by the matrices
	$$
	L_{j}=D_{0}(\text{exp}_{f(y_{j})}^{-1}\circ f\circ \text{exp}_{y_{j}})=
	\begin{pmatrix}
		A_{j}&B_{j}\\
		C_{j}&D_{j}
	\end{pmatrix},\quad j=0,1,\cdots,N_{k+1}-1.
	$$ 
	
	Consider any $g \in \operatorname{Diff}(M)$ satisfying $ \rho(f,g)\leqslant d\leqslant d_0 $. For each $j=0,1,\cdots,N_{k+1}-1$ and any $v \in X_{j}$, we define the local representations of $ f $ and $ g $: 
	$$
	F_{j}(v)=\text{exp}_{y_{j+1}}^{-1}\circ f\circ \text{exp}_{y_{j}}(v)  \text{  and  } G_{j}(v)=\text{exp}_{y_{j+1}}^{-1}\circ g\circ \text{exp}_{y_{j}}(v). 
	$$
	{\em Step 2.} In this step, we define a collection of new norms $|\cdot|_{j,N} $ on each tangent space $X_{j}$ and construct an important mapping.
	
	Since for any $ i $, $ \{x_{i},n_{i}\} $ is $ (\lambda,\varepsilon) $-quasi-semi hyperbolic, this means:
	\begin{equation*}
		\begin{alignedat}{2}
			&\quad \prod_{j=N_{i}}^{h-1} \left\|D_{j}\right\|\leqslant \lambda^{h-N_{i}}, &&  \quad h=N_{i}+1,N_{i}+2,\cdots,N_{i+1}-1,  \\
			&\quad \prod_{j=h}^{N_{i+1}-1} m(A_{j})\geqslant \lambda^{h-N_{i+1}},&& \quad h=N_{i},N_{i}+1,\cdots,N_{i+1}-1,\\
			&\quad \frac{\left\|D_{j}\right\|}{m(A_{j})}\leqslant\lambda^{2},  && \quad j=N_{i},N_{i}+1,\cdots,N_{i+1}-1.\\
			&\quad \left\|B_{j}\right\|,\left\|C_{j}\right\|\leqslant \varepsilon && \quad j=N_{i},N_{i}+1,\cdots,N_{i+1}-1.
		\end{alignedat}
	\end{equation*}
	The above equations imply that the pair of sequences $\{||B_{22j}||,m(B_{11j})\}_{j= N_{i}}^{N_{i+1}-1}$ is $ \lambda $-quasi-hyperbolic pair of sequences. By Lemma \ref{lem:3.1} and Remark \ref{rem:3.1}, $\{||B_{22j}||,m(B_{11j})\}_{j= N_{i}}^{N_{i+1}-1}$ has a well-adapted sequence $\{h_{j}\}_{j= N_{i}}^{N_{i+1}-1} $ satisfying
	\begin{equation}\label{3.3}h_{j}^{-1}m(A_{j})>\lambda^{-1}\quad \text{and}\quad h_{j}^{-1}\left\|D_{j}\right\|<\lambda,\quad \forall j=N_{i},N_{i}+1,\cdots,N_{i+1}-1.
	\end{equation}
	\begin{equation}\label{3.4} \frac{1}{R}\leqslant h_{j}\leqslant R,\quad\forall j=N_{i},N_{i}+1,\cdots,N_{i+1}-1.
	\end{equation}
	
	Define $ l_{j}=\prod_{k=N_{i}}^{j-1}h_{k} $ for $ j=N_{i}+1,N_{i}+2,\cdots,N_{i+1}-1 $(denote $ l_{N_{i}-1}=1 $). From the definition of a balance sequence, we know that  $ l_{j}\leqslant 1 $.
	For each $j=N_{i},N_{i}+1,\cdots,N_{i+1}-1$, we introduce a new norm $|\cdot|_N = |\cdot|_{j,N}$ on $X_{j}$ by $$|v|_N = l_j^{-1} |v|, \quad \forall v \in X_{j}.$$
	Since $$m_N(A_{j}) = \inf_{v \in E^u_j, v \neq 0} \frac{|A_{j}v|_N}{|v|_N} = \inf_{v \in E^u_j, v \neq 0} \frac{l_{j+1}^{-1} |A_{j}v|}{l_j^{-1} |v|} = h_j^{-1} m(A_{j}),$$$$\|D_{j}\|_N = \sup_{v \in E^s_j, v \neq 0} \frac{|D_{j}v|_N}{|v|_N} = \sup_{v \in E^s_j, v \neq 0} \frac{l_{j+1}^{-1} |D_{j}v|}{l_j^{-1} |v|} = h_j^{-1} \|D_{j}\|.$$
	By (\ref{3.3}), we have
	\begin{equation}\label{3.5} m_{N}(A_{j}) > \lambda^{-1} \quad \text{and} \quad \|D_{j}\|_N< \lambda.
	\end{equation}
	
	For a given  $ v_{j}\in X_{j} $ with $ |P_{j}^{s}v_{j}|_{N}\leqslant\eta $ define the mapping  $ \Phi_{v_{j}}:E_{j}^{u}(\eta)\rightarrow E_{j+1}^{u} $ on the unstable ball $ E_{j}^{u}(\eta)=\{v\in E_{j}^{u}:|v|_{N}\leqslant\eta\} $ by
	$$
	\Phi_{v_{j}}(w_{j})=P_{j+1}^{u}(F_{j}(P_{j}^{s}v_{j}+w_{j})-F_{j}(P_{j}^{s}v_{j})).
	$$ Here, $ P_{j}^{s}:X_{j} \rightarrow E_{j}^{s}  $ and $ P_{j}^{u}:X_{j} \rightarrow E_{j}^{u} $ are the projection operators defined by:
	$$
	P_{j}^{s}X_{j}=E_{j}^{s},\quad P_{j}^{s}E_{j}^{u}=0;
	$$
	$$
	P_{j}^{u}X_{j}=E_{j}^{u},\quad P_{j}^{u}E_{j}^{s}=0.
	$$
	
	Furthermore, expressing the local representation of $ f $ in components as:
	$$
	\text{exp}_{y_{j+1}}^{-1}\circ f \circ \text{exp}_{y_{j}}=(\alpha_{j}^{u},\alpha_{j}^{s}),\quad\forall j=0,1,\cdots,N_{k+1}-1,
	$$
	then the map $\Phi_{v_j}$ takes the form:
	$$
	\Phi_{v_{j}}(w_{j})=\alpha_{j}^{u}(P_{j}^{s}v_{j}+w_{j})-\alpha_{j}^{u}(P_{j}^{s}v_{j}).
	$$
	
	{\em Step 3.} We present several important lemmas that will be used in the proof of Theorem \ref{th:3.1}.
	
	\begin{lemma}\label{lem:3.2} Given $ i\in [0,k] $. For each $j=N_{i},N_{i}+1,\cdots,N_{i+1}-1.$ the mapping $\Phi_{v_j}$ is expanding. Specifically, it satisfies
		$$|\Phi_{v_j}(w_j) - \Phi_{v_j}(w_j')|_N \geqslant \widetilde{\lambda}^{-1} |w_j - w_j'|_N, \quad \forall w_j, w_j' \in E_{j}^u(\eta).$$ 
	\end{lemma}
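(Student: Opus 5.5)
We will prove Lemma \ref{lem:3.2} by writing $\Phi_{v_j}(w_j)-\Phi_{v_j}(w_j')$ as the action of an averaged derivative and comparing that derivative with the block $A_j$, whose minimum norm in the $|\cdot|_N$ norm is controlled by (\ref{3.5}). Fix $w_j,w_j'\in E^u_j(\eta)$ and put $\xi_t=P^s_jv_j+w_j'+t(w_j-w_j')$ for $t\in[0,1]$. Since $|P^s_jv_j|_N\leqslant\eta$ and the box-type norm is a maximum of the components (Remark \ref{rem:2.1} allows us to assume this), we have $|\xi_t|_N\leqslant\eta$. Because $l_j\leqslant 1$, this also gives $|\xi_t|\leqslant\eta$, so $\xi_t\in X_j(\eta)$. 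By the mean value theorem in integral form,
$$\Phi_{v_j}(w_j)-\Phi_{v_j}(w_j')=\int_0^1 P^u_{j+1}D_{\xi_t}F_j\,dt\,(w_j-w_j')=\big(A_j+K_j\big)(w_j-w_j'),$$
where $K_j=\int_0^1 P^u_{j+1}\big(D_{\xi_t}F_j-L_j\big)|_{E^u_j}\,dt$.

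Next we bound $K_j$ in the original norm. We split $D_{\xi_t}F_j-D_{y_j}f$ into two pieces. The first is $D_{\xi_t}(\exp^{-1}_{f(y_j)}\circ f\circ\exp_{y_j})-D_0(\exp^{-1}_{y_{j+1}}\circ f\circ\exp_{y_j})$, which is at most $\frac{\widetilde\lambda-\lambda}{4R}$ by (\ref{3.1}). The second is the change of chart from $f(y_j)$ to $y_{j+1}$ inside the pseudo-orbit step, which is at most $\frac{\widetilde\lambda-\lambda}{4R}$ by (\ref{3.2}). Since the projections have norm at most $1$ in the box norm, these give $\|K_j\|\leqslant\frac{\widetilde\lambda-\lambda}{2R}$ plus the $\varepsilon$-contribution, if any, coming from the off-diagonal block; that contribution does not actually enter, because $P^u$ applied to $E^u$ only sees $A_j$. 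For indices $j$ strictly inside an orbit segment we have $y_{j+1}=f(y_j)$, so only (\ref{3.1}) is needed there.

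The main obstacle is converting this bound to the $|\cdot|_N$ norm. Since $|v|_N=l_j^{-1}|v|$ on $X_j$ and $l_{j+1}^{-1}$ on $X_{j+1}$, we get $\|K_j\|_N=h_j^{-1}\|K_j\|$, and by (\ref{3.4}) this is at most $R\|K_j\|\leqslant\frac{\widetilde\lambda-\lambda}{2}$. It follows that
$$m_N(A_j+K_j)\geqslant m_N(A_j)-\|K_j\|_N>\lambda^{-1}-\tfrac{\widetilde\lambda-\lambda}{2}.$$
It remains to check that $\lambda^{-1}-\frac{\widetilde\lambda-\lambda}{2}\geqslant\widetilde\lambda^{-1}$. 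This holds because $\lambda^{-1}-\widetilde\lambda^{-1}=\frac{\widetilde\lambda-\lambda}{\lambda\widetilde\lambda}\geqslant\widetilde\lambda-\lambda\geqslant\frac{\widetilde\lambda-\lambda}{2}$, using $\lambda\widetilde\lambda<1$. This yields $|\Phi_{v_j}(w_j)-\Phi_{v_j}(w_j')|_N\geqslant\widetilde\lambda^{-1}|w_j-w_j'|_N$, as claimed.

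One point to double-check is whether the constant $\frac{1}{R}\leqslant h_j$ of (\ref{3.4}) is the right rescaling factor, as opposed to its product $l_j$. It is, because the norm change between consecutive fibers involves only the ratio $l_{j+1}/l_j=h_j$.
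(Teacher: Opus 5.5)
Your proof is correct and follows essentially the paper's argument. Both use the mean value theorem along $\xi_t$, compare the $u$-derivative of $\alpha^u_j$ with $A_j$ via (\ref{3.1})--(\ref{3.2}), rescale with $\|\cdot\|_N=h_j^{-1}\|\cdot\|\leqslant R\|\cdot\|$, and invoke $m_N(A_j)>\lambda^{-1}$ from (\ref{3.5}). You fold the paper's two-stage estimate (through the intermediate bound $(\frac{\widetilde\lambda+\lambda}{2})^{-1}$) into a single perturbation $A_j+K_j$ of the averaged derivative, which is actually the cleaner finish: the paper's step $|\int_0^1 T(t)u\,dt|_N\geqslant\inf_t m_N(T(t))\,|u|_N$ only holds here because every $T(t)$ is close to one fixed operator, and your formulation makes that explicit (you also check $\xi_t\in X_j(\eta)$, which the paper omits).
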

	\begin{proof}[Proof of Lemma \ref{lem:3.2}]
		Given $ i\in [0,k] $. For $ j=N_{i},N_{i}+1,\cdots,N_{i+1}-2, $ by the first equality of (\ref{3.3})gives directly
		$$
		m_N\left(\frac{\partial}{\partial u} \alpha_j^u(0)\right)= m_N(A_{j})\geqslant  \lambda^{-1}> (\frac{{\widetilde{\lambda}+\lambda}}{2}) ^{-1}. 
		$$
		For the terminal index $ j=N_{i+1}-1 $, by (\ref{3.2}), (\ref{3.5}) and the property $ l_{j}\leqslant 1 $ imply that
		$$\begin{alignedat}{2}
			&m_N\left(\frac{\partial}{\partial u} \alpha_{N_{i+1}-1}^u(0)\right)& &= h^{-1}_{N_{i+1}-1} m\left(\frac{\partial}{\partial u} \alpha_{N_{i+1}-1}^u(0)\right) \\
			& &&\geqslant h^{-1}_{N_{i+1}-1} \left(m(A_{N_{i+1}-1 })-\left\|\frac{\partial}{\partial u} \alpha_{N_{i+1}-1}^u(0) -A_{N_{i+1}-1 }\right\|\right)\\
			& &&\geqslant   m_N(A_{N_{i+1}-1 })-\frac{\widetilde\lambda-\lambda}{4} \\
			& &&\geqslant    {\lambda}^{-1} -\frac{\widetilde\lambda-\lambda}{4}\\
			& && \geqslant (\frac{{\widetilde{\lambda}+\lambda}}{2}) ^{-1}.
		\end{alignedat}
		$$
		Thus, for every​$ j=N_{i},N_{i}+1,\cdots,N_{i+1}-1, $ we have the uniform lower bound 
		$$
		m_N\left(\frac{\partial}{\partial u} \alpha_j^u(0)\right)> (\frac{{\widetilde{\lambda}+\lambda}}{2}) ^{-1}. 
		$$
		Using inequality (\ref{3.1}), which stems from the uniform continuity of $ Df $, we can control the variation of the derivative. For any $ \xi \in X_{j}(\eta),\  j=N_{i},N_{i}+1,\cdots,N_{i+1}-1,$
		\begin{equation}
			\begin{aligned}
				\left\|\frac{\partial}{\partial u} \alpha_j^u(\xi)- \frac{\partial}{\partial u} \alpha_j^u(0) \right\|_N
				&\leqslant R \left\|\frac{\partial}{\partial u} \alpha_j^u(\xi)- \frac{\partial}{\partial u} \alpha_j^u(0) \right\|\\
				&=R \left\|D_\xi \exp _{y_{j+1}}^{-1} \circ f_j \circ \exp _{y_j} -D_0 \exp _{y_{j+1}}^{-1} \circ f_j \circ \exp _{y_j}  \right\|\\
				&\leqslant R \frac{\widetilde\lambda-\lambda}{4R}\\
				&\leqslant (\frac{{\widetilde{\lambda}+\lambda}}{2}) ^{-1}-\tilde\lambda^{-1}.
			\end{aligned}
			\nonumber
		\end{equation}
		This implies that for all $ \xi \in X_{j}(\eta),$
		\begin{equation}
			\begin{aligned}
				m_N\left(\frac{\partial}{\partial u} \alpha_j^u(\xi)\right)
				&\geqslant m_N\left(\frac{\partial}{\partial u} \alpha_j^u(0)\right)- \left\|\frac{\partial}{\partial u} \alpha_j^u(\xi)- \frac{\partial}{\partial u} \alpha_j^u(0) \right\|_N \\
				&\geqslant (\frac{{\widetilde{\lambda}+\lambda}}{2}) ^{-1}-((\frac{{\widetilde{\lambda}+\lambda}}{2}) ^{-1}-\widetilde\lambda^{-1})= \widetilde\lambda^{-1}.\\
			\end{aligned}
			\nonumber
		\end{equation}
		
		Let $ w_{j},w_{j}^{'}\in E_{j}^{u}(\eta) $. Define  $ \tau(t)=P_{j}^{s}v_{j}+w_{j}+t(w_{j}^{'}-w_{j}) $ for $ t\in[0,1] $. By the mean value theorem and the definition of $ F_{v_{j}} $, we have
		\begin{equation}
			\begin{aligned}
				|\Phi_{v_{j}}(w_{j})-\Phi_{v_{j}}(w_{j}^{'})|_{N}
				&=|\alpha_{j}^{u}(P_{j}^{s}v_{j}+w_{j})-\alpha_{j}^{u}(P_{j}^{s}v_{j}+w_{j}^{'})|_{N} \\
				&=|\int_{0}^{1}\frac{\partial}{\partial u}\alpha_{j}^{u}(\tau(t))(w_{j}-w_{j}^{'})dt|_{N}\\
				&\geqslant \underset{t\in [0,1]}{\inf}m_{N}(\frac{\partial}{\partial u}\alpha_{j}^{u}(\tau(t)))|w_{j}-w_{j}^{'}|_{N}\\
				&\geqslant \widetilde\lambda^{-1}|w_{j}-w_{j}^{'}|_{N}.
			\end{aligned}
			\nonumber
		\end{equation}
		This completes the proof.
	\end{proof}
	The above lemma says that $\Phi_{v_{j}} $ is expanding on $ E_{j}^{u}(\eta) $. Consequently, its inverse map, denoted by
	$$
	\Psi_{v_{j}}:=\Phi_{v_{j}}^{-1}:\Phi_{v_{j}}(E_{j}^{u}(\eta))\rightarrow E_{j}^{u}(\eta)
	$$
	is well defined and is a contraction with Lipschitz constant at most $ \widetilde\lambda .$
	
	We now define the space on which the fixed-point argument will take place. Let
	$$
	\mathcal{B}_{j}^{u}=\{\{v_{j}\}_{j=0}^{N_{k+1}}:v_{j}\in E_{j}^{u}\},  
	$$
	$$
	\mathcal{B}_{j}^{u}(\eta)=\{\{v_{j}\}_{j=0}^{N_{k+1}}\in E_{j}^{u}:|v_{j}|_{N}\leqslant\eta\}.
	$$
	Introduce the operator $ \mathcal{A}:\mathcal{B}_{j}^{u}(\eta) \rightarrow\mathcal{B}_{j}^{u} $ which transforms a sequence $ \boldsymbol{v}=\{v_{j}\}_{j=0}^{N_{k+1}} $ into a new sequence $ \boldsymbol{w}=\{w_{j}\}_{j=0}^{N_{k+1}} $ defined by:
	
	For $ j=0,N_{k+1}-1, $
	\begin{align}\label{3.6}
		P_{0}^{s}w_{0}=0, \quad
		P_{N_{k+1}}^{u}w_{N_{k+1}}=0.
	\end{align}
	
	For $ j=0,\cdot\cdot\cdot,N_{k+1}-1, $
	\begin{align}\label{3.7}
		P_{j+1}^{s}w_{j+1}=P_{j+1}^{s}(G_{j}(v_{j})).
	\end{align}
	
	For $ j=0,\cdot\cdot\cdot,N_{k+1}-1,$
	\begin{align}\label{3.8}
		P_{j}^{u}w_{j}=\Psi_{v_{j}}(P_{j+1}^{u}(-G_{j}(v_{j})+F_{j}(v_{j})-F_{j}(P_{j}^{s}v_{j})+v_{j+1})).
	\end{align}
	
	\begin{lemma}\label{lem:3.3} 	The operator $ \mathcal{A} $ is well defined on $ \mathcal{B}_{j}^{u}(\eta) $.
	\end{lemma}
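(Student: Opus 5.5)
To show that $\mathcal{A}$ is well defined on $\mathcal{B}_{j}^{u}(\eta)$, we must check two things: that every expression in (\ref{3.6})--(\ref{3.8}) makes sense for $\boldsymbol v\in\mathcal{B}^u_j(\eta)$, and that the output $\boldsymbol w$ is a genuine sequence of tangent vectors. The components $P^s_{j+1}w_{j+1}$ in (\ref{3.7}) are clearly defined once $G_j(v_j)$ is. Since $|v_j|_N\leqslant\eta$ and $l_j\leqslant 1$, we have $|v_j|\leqslant\eta<\varepsilon_1$, which lies inside the domain of the exponential charts. The smallness of $d_0$ and $\delta_0$ then makes $g\circ\exp_{y_j}(v_j)$ lie within the injectivity radius of $y_{j+1}$, so $G_j(v_j)$ and $F_j(v_j)$ are defined. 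Likewise $|P^s_jv_j|_N\leqslant\eta$, so the map $\Phi_{v_j}$ of Lemma \ref{lem:3.2} is defined and $\Psi_{v_j}=\Phi_{v_j}^{-1}$ exists on $\Phi_{v_j}(E^u_j(\eta))$.

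The real content is the third component (\ref{3.8}). We must show that the argument
$$z_j:=P^u_{j+1}\bigl(-G_j(v_j)+F_j(v_j)-F_j(P^s_jv_j)+v_{j+1}\bigr)$$
lies in the image $\Phi_{v_j}(E^u_j(\eta))$. I would do this in two steps.

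The first step is to show the image contains a large ball. $\Phi_{v_j}$ is a local homeomorphism on $E^u_j$ (its derivative $\partial_u\alpha^u_j$ is invertible), satisfies $\Phi_{v_j}(0)=0$, and by Lemma \ref{lem:3.2} expands distances by at least $\widetilde\lambda^{-1}$. A standard invariance-of-domain / Lipschitz inverse function argument (Theorem 2.7 of \cite{LW}, as already used in Section 2) then gives
$$\Phi_{v_j}(E^u_j(\eta))\supset E^u_{j+1}(\widetilde\lambda^{-1}\eta).$$
Equivalently, for each target $z$ one can run the contraction $w\mapsto w-(\partial_u\alpha^u_j(0))^{-1}(\Phi_{v_j}(w)-z)$ on $E^u_j(\eta)$.

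The second step is to estimate $|z_j|_N$ by splitting $z_j$ into three pieces:
\begin{itemize}
\item The perturbation term $P^u_{j+1}(F_j(v_j)-G_j(v_j))$. Since $\rho(f,g)\leqslant d$ and the chart maps are uniformly Lipschitz (with constant at most a fixed multiple of $R$), its Riemannian norm is $O(d)$. After rescaling by $l_{j+1}^{-1}$, where $l_{j+1}^{-1}\leqslant R^{a}=C$ by (\ref{3.4}), its $N$-norm is at most of order $Cd\leqslant Cd_0=\frac{1-\widetilde\lambda}{4}\eta$.
\item The term $P^u_{j+1}v_{j+1}$, whose $N$-norm is at most $\eta$.
\item The term $P^u_{j+1}(F_j(v_j)-F_j(P^s_jv_j))=\Phi_{v_j}(P^u_jv_j)$. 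This is already in the image, but its size is of order $\widetilde\lambda^{-1}\eta$ or larger, so absorbing it needs care.
\end{itemize}
At the transition indices $j=N_{i+1}-1$, an extra error of size at most $\delta$ enters through $\exp^{-1}_{y_{j+1}}$ versus $\exp^{-1}_{f(y_j)}$. This is controlled by $C\delta\leqslant\delta_1<\frac{1-\widetilde\lambda}{4}\eta$.

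The main obstacle is that the crude sum of these three bounds exceeds $\widetilde\lambda^{-1}\eta$. The fix I would try first is to avoid bounding the third term separately and instead solve directly for $w=P^u_jw_j$ in
$$\Phi_{v_j}(w)=\Phi_{v_j}(P^u_jv_j)+(\text{remaining terms}).$$
Since $\Phi_{v_j}$ is expanding by Lemma \ref{lem:3.2}, we get $|w-P^u_jv_j|_N\leqslant\widetilde\lambda\,|\text{remaining terms}|_N$. This is solvable provided $\Phi_{v_j}$ can be extended to a slightly larger ball, which the choice of $\eta$ in (\ref{3.1}) permits. If that is insufficient, I would fall back to defining $\Psi_{v_j}$ on all of $E^u_{j+1}$ by a Lipschitz extension of $\alpha^u_j$ outside $X_j(\eta)$ that keeps the derivative estimates. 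Then $\Phi_{v_j}$ becomes a global expanding homeomorphism of $E^u_j$ onto $E^u_{j+1}$, and well-definedness is immediate.

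Finally, the endpoint conditions (\ref{3.6}) fix $P^s_0w_0=0$ and $P^u_{N_{k+1}}w_{N_{k+1}}=0$ consistently, since these components are not otherwise prescribed. This completes the definition of $\boldsymbol w\in\mathcal{B}^u_j$.
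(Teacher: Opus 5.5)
There is a genuine gap, and it starts with an algebra slip in your decomposition of $z_j$. Since $\Phi_{v_j}(P^u_jv_j)=P^u_{j+1}(F_j(v_j)-F_j(P^s_jv_j))$, your three bullet terms add up to $z_j+P^u_{j+1}F_j(v_j)$, not $z_j$. The single $F_j(v_j)$ in $z_j$ is counted twice: once in $F_j(v_j)-G_j(v_j)$ and once in $\Phi_{v_j}(P^u_jv_j)$. Once $F_j(v_j)$ is paired with $-G_j(v_j)$, the only other piece besides $P^u_{j+1}v_{j+1}$ is $-P^u_{j+1}F_j(P^s_jv_j)$. This depends only on the stable component and is \emph{small}, so the large term you call the main obstacle is not actually present.

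The missing idea is how to bound this piece. Split it as $-P^u_{j+1}F_j(0_j)+P^u_{j+1}(F_j(0_j)-F_j(P^s_jv_j))$; the paper groups the first part with $F_j(v_j)-G_j(v_j)$ as $M_{1j}$ and calls the second $M_{2j}$. The first part is the pseudo-orbit jump, which you already control by $C\delta_0=\delta_1<\frac{1-\widetilde\lambda}{4}\eta$. For the second, the mean value theorem bounds it by $\sup_{\xi}\|\partial_s\alpha^u_j(\xi)\|_N\,|P^s_jv_j|_N$. By (\ref{3.1}) and (\ref{3.2}), the cross derivative $\partial_s\alpha^u_j$ is close to the off-diagonal block of $L_j$, whose norm is at most $\varepsilon\leqslant\varepsilon_0=\frac{1+\lambda-2\widetilde\lambda}{4R}$ by quasi-semi-hyperbolicity. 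This gives $|M_{2j}|_N\leqslant\frac{1-\widetilde\lambda}{2}\eta$. The crude sum $\frac{1-\widetilde\lambda}{2}\eta+\frac{1-\widetilde\lambda}{2}\eta+\eta=(2-\widetilde\lambda)\eta\leqslant\widetilde\lambda^{-1}\eta$ (equivalent to $(1-\widetilde\lambda)^2\geqslant 0$) then closes. Your first step, $\Phi_{v_j}(E^u_j(\eta))\supset E^u_{j+1}(\widetilde\lambda^{-1}\eta)$, which the paper uses without comment, finishes the proof. Note that your proposal never uses the smallness of the off-diagonal blocks, and that is exactly where $\varepsilon_0$ enters.

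Your proposed remedies do not fill the gap either. The true remainder is $z_j-\Phi_{v_j}(P^u_jv_j)=P^u_{j+1}(v_{j+1}-G_j(v_j))$. For a general $\boldsymbol v\in\mathcal{B}^u_j(\eta)$ this is not small: its $N$-norm can be roughly $(1+\|A_j\|_N)\eta$. So solving $\Phi_{v_j}(w)=\Phi_{v_j}(P^u_jv_j)+P^u_{j+1}(v_{j+1}-G_j(v_j))$ only places $w$ in a ball several times larger than $E^u_j(\eta)$, not a slightly larger one. There (\ref{3.1}) gives no control, and nothing forces $w$ into $E^u_j(\eta)$, which is the domain of $\Psi_{v_j}$.

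The global Lipschitz extension changes the operator, because in $\mathcal{A}$ the map $\Psi_{v_j}$ is by definition the inverse of $\Phi_{v_j}$ restricted to $E^u_j(\eta)$. You would be proving well-definedness of a different map. It also only postpones the real estimate: Lemma \ref{lem:3.4} needs precisely $|z_j|_N\leqslant\widetilde\lambda^{-1}\eta$ to conclude $|P^u_jw_j|_N\leqslant\eta$.
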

	\begin{proof}[Proof of Lemma \ref{lem:3.3}]
		For, it is clear by (\ref{3.7}) that the right hand side of $ P_{j+1}^{s}w_{j+1}=P_{j+1}^{s}(G_{j}(v_{j})) $ is well
		defined and depends continuously on $ \boldsymbol{v}\in \mathcal{B}_{j}^{u}(\eta) $. Hence, by Lemma (\ref{lem:3.2}), it is suffices to prove that
		\begin{align}\label{3.9}
			\begin{split}
				|P_{j+1}^{u}(-G_{j}(v_{j})+F_{j}(v_{j})-F_{j}(P_{y_{j}}^{s}v_{j})+v_{j+1})|_{N}\leqslant\widetilde\lambda^{-1}\eta.
			\end{split}
		\end{align}
		
		Rewrite the last inequality in the form
		$$
		|M_{1j}+M_{2j}+M_{3j}|_{N}\leqslant\widetilde\lambda^{-1}\eta,
		$$
		where$$
		\begin{aligned}
			&M_{1j}=P_{j+1}^{u}(-G_{j}(v_{j})+F_{j}(v_{j})+0_{j+1}-F_{j}(0_{j})),  \\
			&M_{2j}=P_{j+1}^{u}(F_{j}(0_{j})-F_{j}(P_{j}^{s}v_{j})), \\
			&M_{3j}=P_{j+1}^{u}v_{j+1}.  \\
		\end{aligned}$$
		To estimate $ |M_{1j}|_{N} $, for one thing, by the definition of $ \|f - g\|_\infty $ and $ l_{j} $,
		$$ |G_{j}(v_{j})-F_{j}(v_{j})|_{N}\leqslant l_{j}^{-1}\|f - g\|_\infty\leqslant l_{j}^{-1}d\leqslant l_{j}^{-1}d_0. $$
		In addition, by the definition of the pseudo-orbit, $$\left|0_{j+1}-F\left(0_j\right)\right|_{N}
		\leq l^{-1}_j\delta_{0}.$$
		By (\ref{3.4}), $$ l_{j}^{-1}=\prod_{k=N_{i}}^{j-1}h_{k}^{-1}\leqslant R^{j-N_{i}} \leqslant C $$
		then by the choosing $ d_0 $ and $ \delta_{0} $, 
		\begin{align} \label{3.10}
			\begin{split}
				l_{j}^{-1}d_0+l_{j}^{-1}\delta_{0}
				\leqslant C\frac{1-\widetilde{\lambda}}{4C}\eta+C\cdot\frac{\delta_{1} }{C} 
				\leqslant   \frac{1-\widetilde{\lambda}}{4}\eta+\frac{1-\widetilde{\lambda}}{4}\eta=\frac{1-\widetilde\lambda}{2}\eta
			\end{split}
		\end{align}
		
		so
		\begin{align} \label{3.11}
			\begin{split}
				|M_{1j}|_{N}\leqslant|G_{j}(v_{j})-F_{j}(v_{j})|_{N}+|0_{j+1}-F_{j}(v_{j})|_{N}\leqslant\frac{1-\widetilde\lambda}{2}\eta.
			\end{split}
		\end{align}
		For $ |M_{2j}|_{N} $, this term arises from the difference between evaluating $ F_{j} $ at the origin and at the stable component $ P_{j}^{s}v_{j} $. Applying the Mean Value Theorem and (\ref{3.1}), (\ref{3.2}), we obtain:
		\begin{align} \label{3.12}
			\begin{split}
				|M_{2j}|_{N}
				&=|P_{j+1}^{u}(F_{j}(0_{j})-F_{j}(P_{j}^{s}v_{j}))|_{N}\\
				&=|\alpha_{j}^{u}(0_{j})-\alpha_{j}^{u}(P_{j}^{s}v_{j})|_{N}\\
				&=|\int_{0}^{1}\frac{\partial}{\partial s}\alpha_{j}^{u}(t\cdot P_{j}^{s}v_{j}) \cdot P_{j}^{s}v_{j}dt|_{N}\\
				&\leqslant \underset{\xi\in E_{j}^{u}(\eta) }{\sup}||\frac{\partial\alpha_{j}^{u}}{\partial s}(\xi)||_{N}\cdot |P_{j}^{s}v_{j}|_{N} \\
				&\leqslant   \left(\sup\limits_{\xi\in E^u_{j}(\eta)}
				\left\|\frac{\partial \alpha_j^u}{\partial s}(\xi) -\frac{\partial \alpha_j^u}{\partial s}(0) \right\|_{N}+ \left\|\frac{\partial \alpha_j^u}{\partial s}(0) -C_{j} \right\|_{N} +\|C_{j} \|_{N}\right) \eta\\
				&\leqslant (R\frac{\widetilde\lambda-\lambda}{4R}+R\frac{\widetilde\lambda-\lambda}{4R}  + R\frac{1+\lambda-2\widetilde\lambda}{4R}) \eta  \\
				&\leqslant \frac{1-\widetilde\lambda}{2} \eta.\\
			\end{split}
		\end{align}
		
		Furthermore, since $ \boldsymbol{v}\in\mathcal{B}_{j}^{u}(\eta) $, it is clear that
		\begin{align}\label{3.13}
			\begin{split}
				|M_{3j}|_{N}=||P_{j+1}^{u}v_{j+1}||\leqslant\eta.
			\end{split}
		\end{align}
		so from (\ref{3.11}),(\ref{3.12}) and (\ref{3.13}) , we conclude
		$$
		||M_{1j}+M_{2j}+M_{3j}||_{N}\leqslant \frac{1-\widetilde\lambda}{2}\eta+\frac{1-\widetilde\lambda}{2}\eta+\eta=(2-\widetilde\lambda)\eta\leqslant\widetilde\lambda^{-1}\eta.
		$$
		Hence, the operator is well-defined.
	\end{proof}
	\begin{lemma}\label{lem:3.4}
		The set  $ \mathcal{B}_{j}^{u}(\eta) $ is invariant under the operator $\mathcal{A}   $.
	\end{lemma}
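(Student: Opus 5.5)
We need to show that for $\boldsymbol v\in\mathcal B^u_j(\eta)$ the image $\boldsymbol w=\mathcal A\boldsymbol v$ again satisfies $|w_j|_N\leqslant\eta$ for every $j$. Since the norm on $X_j$ is only a rescaling of the original one, we estimate the two components $P^u_jw_j$ and $P^s_jw_j$ separately and then combine them (box-type norm, Remark \ref{rem:2.1}).

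\emph{Unstable component.} By (\ref{3.8}), $P^u_jw_j=\Psi_{v_j}(z_j)$ with $z_j=P^u_{j+1}(-G_j(v_j)+F_j(v_j)-F_j(P^s_jv_j)+v_{j+1})$. Lemma \ref{lem:3.3}, estimate (\ref{3.9}), shows $|z_j|_N\leqslant(2-\widetilde\lambda)\eta\leqslant\widetilde\lambda^{-1}\eta$, so $z_j$ lies in the domain of $\Psi_{v_j}$. Since $\Phi_{v_j}(0)=0$, we have $\Psi_{v_j}(0)=0$. Lemma \ref{lem:3.2} makes $\Psi_{v_j}$ a $\widetilde\lambda$-contraction, hence
$$|P^u_jw_j|_N\leqslant\widetilde\lambda\,|z_j|_N\leqslant\widetilde\lambda\cdot\widetilde\lambda^{-1}\eta=\eta .$$
At the terminal index (\ref{3.6}) gives $P^u w=0$ directly.

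\emph{Stable component.} For $j=0$ the value is $0$ by (\ref{3.6}). For $j+1\geqslant1$ we split, analogously to $M_{1j},M_{2j},M_{3j}$,
$$P^s_{j+1}G_j(v_j)=P^s_{j+1}\bigl(G_j(v_j)-F_j(v_j)+F_j(0_j)-0_{j+1}\bigr)+P^s_{j+1}\bigl(F_j(v_j)-F_j(0_j)\bigr).$$
The first term is bounded by $\frac{1-\widetilde\lambda}{2}\eta$, exactly as in (\ref{3.10})--(\ref{3.11}). For the second term we use the mean value theorem along $t\mapsto tv_j$. Write $\partial\alpha^s_j/\partial s$ for the derivative of $\alpha^s_j$ in the stable direction and $\partial\alpha^s_j/\partial u$ for its derivative in the unstable direction. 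Their values at $0$ are close to $D_j$ and $C_j$ respectively, by (\ref{3.2}), and they vary little on $X_j(\eta)$, by (\ref{3.1}). In the $N$-norm, (\ref{3.5}) gives $\|D_j\|_N<\lambda$, and $\|C_j\|_N\leqslant R\varepsilon_0=\frac{1+\lambda-2\widetilde\lambda}{4}$. Therefore
$$\Bigl|P^s_{j+1}\bigl(F_j(v_j)-F_j(0_j)\bigr)\Bigr|_N\leqslant\Bigl(\lambda+\tfrac{\widetilde\lambda-\lambda}{2}\Bigr)\eta+\Bigl(\tfrac{1+\lambda-2\widetilde\lambda}{4}+\tfrac{\widetilde\lambda-\lambda}{2}\Bigr)\eta .$$

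\emph{Main obstacle.} The delicate step is showing that the total stable bound, $\frac{1-\widetilde\lambda}{2}\eta$ plus the expression above, is at most $\eta$. A naive sum of the constants seems slightly too large. I would first try to exploit that $v_j\in E^u_j$: the $v$'s are taken in the unstable bundle, so $P^s_jv_j=0$ and the stable-direction derivative term vanishes. This leaves only the $C_j$-type term plus the two continuity errors, giving
$$\tfrac{1-\widetilde\lambda}{2}+\tfrac{1+\lambda-2\widetilde\lambda}{4}+(\widetilde\lambda-\lambda)<1,$$
which reduces to $3\widetilde\lambda<3+\lambda$, true since $\widetilde\lambda<1$. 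If that reading of $\mathcal B^u_j$ fails, I would instead shrink the error constants (replace $\frac{\widetilde\lambda-\lambda}{4R}$ by a smaller fraction and $\delta_1,d_0$ accordingly) so that $\lambda$ plus the errors stays below $\widetilde\lambda$. Then the stable bound becomes $\widetilde\lambda\eta+\frac{1-\widetilde\lambda}{2}\eta\leqslant\eta$.

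Combining the two components with the box norm then gives $|w_j|_N\leqslant\eta$ for all $j$, which is the claimed invariance.
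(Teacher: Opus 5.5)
Your argument follows the paper's structure. The unstable component is handled via $\Psi_{v_j}(0)=0$, the $\widetilde\lambda$-Lipschitz bound on $\Psi_{v_j}$, and (\ref{3.9}). The stable component uses the same splitting into $N_{1j}+N_{2j}$ and the mean value theorem with the same derivative bounds. The gap is in your ``main obstacle'' paragraph: the obstacle does not exist, and the route you commit to there fails. The constants you already displayed add up to
\[
\frac{1-\widetilde\lambda}{2}+\Bigl(\lambda+\frac{\widetilde\lambda-\lambda}{2}\Bigr)+\Bigl(\frac{1+\lambda-2\widetilde\lambda}{4}+\frac{\widetilde\lambda-\lambda}{2}\Bigr)=\frac{1-\widetilde\lambda}{2}+\frac{\lambda+\widetilde\lambda}{2}+\frac{1-\lambda}{4}=\frac{3+\lambda}{4}<1 .
\]
The paper rounds the same terms up to $\frac{1-\widetilde\lambda}{2}+\widetilde\lambda+\frac{1-\widetilde\lambda}{2}=1$. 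It uses $\frac{\lambda+\widetilde\lambda}{2}\leqslant\widetilde\lambda$ and $\frac{1-\lambda}{4}\leqslant\frac{1-\widetilde\lambda}{2}$; the latter is where $\widetilde\lambda<\frac{1+\lambda}{2}$ enters.

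Your first remedy rests on a misreading. Despite the notation, $\mathcal{B}^u_j(\eta)$ must consist of sequences with $v_j\in X_j$ and $|v_j|_N\leqslant\eta$ in the box norm, not $v_j\in E^u_j$. The operator prescribes $P^s_{j+1}w_{j+1}=P^s_{j+1}G_j(v_j)$, which is nonzero in general. This happens already for $\boldsymbol v=0$, since typically $g(y_j)\neq y_{j+1}$. Under your reading the set would therefore not be invariant at all, and bounding $|P^s_{j+1}w_{j+1}|_N$ would prove nothing. The closing step $v_{j+1}=G_j(v_j)$ also needs nonzero stable components.

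Your fallback is incomplete as written. In the box norm, $|N_{2j}|_N\leqslant(\|\partial\alpha^s_j/\partial s\|_N+\|\partial\alpha^s_j/\partial u\|_N)\eta$. The second norm is governed by $\|C_j\|_N\leqslant R\varepsilon_0$, which shrinking the continuity constants does not reduce, yet your final bound $\widetilde\lambda\eta+\frac{1-\widetilde\lambda}{2}\eta$ omits it. Drop that paragraph and simply add up the estimates you already have.
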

	\begin{proof}[Proof of Lemma \ref{lem:3.4}]
		By Lemma \ref{lem:3.2} and inequality (\ref{3.8}), we have the uniform bound for the unstable component:
		$$
		|P_{j}^{u}w_{j}|_{N}\leqslant \widetilde\lambda^{-1}\eta\cdot\widetilde\lambda=\eta,\quad j=0,1,\cdots,N_{k+1}-1.
		$$
		To establish the invariance, it remains to show​ that the stable component also remains bounded by $ \eta $ i.e., $ |P_{j}^{s}w_{j}|_{N}\leqslant\eta $ in the following.
		
		For any given $ i\in [0,k] $ and $ j=N_{i},N_{i}+1,\cdots,N_{i+1}-1 $, we decompose the stable component at the next step as follows:
		$$
		P_{j+1}^{s}w_{j+1}=N_{1j}+N_{2j},
		$$
		where
		\begin{flalign}\label{3.14}
			\begin{split}
				&N_{1j}=P_{j+1}^{s}(G_{j}(v_{j})-F_{j}(v_{j})+F_{j}(0_{j})-0_{j+1}),  \\
				&N_{2j}=P_{j+1}^{s}(F_{j}(v_{j})-F_{j}(0_{j})). \\
			\end{split}
		\end{flalign} 
		To estimate $ |N_{1j}|_{N} $, by (\ref{3.8}),  we obtain directly
		\begin{align} \label{3.15}
			\begin{split}
				|N_{1j}|_{N}\leqslant|G_{j}(v_{j})-F_{j}(v_{j})|_{N}+|0_{j+1}-F_{j}(v_{j})|_{N}\leqslant\frac{1-\widetilde\lambda}{2}\eta.
			\end{split}
		\end{align}
		The term $ N_{2j} $ essentially measures the variation of the stable part of $ F_{j} $ from the origin. Using the Mean Value Theorem, we can write
		$$
		N_{2j}=\alpha^{s}(v_{j})-\alpha^{s}(0_{j})=\int_{0}^{1}(\frac{\partial}{\partial u}\alpha_{j}^{s}(tv_{j}),\frac{\partial}{\partial s}\alpha_{j}^{s}(tv_{j}))\cdot (v_{j}^{u},v_{j}^{s})^{T}dt.
		$$
		
		To bound this integral, we first estimate the partial derivatives of $ \beta_j^s $. By (\ref{3.1}), (\ref{3.2}) and the Mean Value Theorem, for any $\xi \in X_{j}(\eta)$,
		$$\begin{alignedat}{2}
			\left\|\frac{\partial}{\partial s} \alpha_j^s(\xi)\right\|_{N}&\leqslant \left\|\frac{\partial}{\partial s} \alpha_j^s(\xi)-\frac{\partial}{\partial s} \alpha_j^s(0) \right\|_{N} + \left\|\frac{\partial}{\partial s} \alpha																																	_j^s(0)-D_{j} \right\|_{N} + \|D_{j} \|_{N}\\
			&\leqslant R\frac{\widetilde\lambda-\lambda}{4R}+R\frac{\widetilde\lambda-\lambda}{4R}  + \lambda   \\
			&\leqslant \frac{\widetilde\lambda-\lambda}{4}+ \frac{\widetilde\lambda-\lambda}{4}+ \lambda\\
			&\leqslant \widetilde\lambda\end{alignedat}
		$$
		Similarly, for the other partial derivative,
		$$\begin{alignedat}{2}
			\left\|\frac{\partial}{\partial u} \alpha_j^s(\xi)\right\|_{N}& \leq
			\left\|\frac{\partial}{\partial u} \alpha_j^s(\xi)-\frac{\partial}{\partial u} \alpha_j^s(0) \right\|_{N}+\left\|\frac{\partial}{\partial u} \alpha_j^s(0)-C_{j} \right\|_{N}  +\|C_{j} \|_{N}\\
			&\leqslant R\frac{\widetilde\lambda-\lambda}{4R}+R\frac{\widetilde\lambda-\lambda}{4R}  + R\frac{1+\lambda-2\widetilde\lambda}{4R}\\
			&\leqslant \frac{\widetilde\lambda-\lambda}{4}+ \frac{\widetilde\lambda-\lambda}{4}+ \frac{1+\lambda-2\widetilde\lambda}{4}\\
			&\leqslant  \frac{1-\widetilde\lambda}{2} .
		\end{alignedat}
		$$
		Therefore, combining these derivative bounds, we obtain
		\begin{align} \label{3.16}
			\begin{split}
				|N_{2j}|_{N}
				&\leqslant \underset{t\in [0,1]}{\sup}||\frac{\partial}{\partial u}\alpha_{j}^{s}(tv_{j})||_{N}|v_{j}^{u}|_{N}+\underset{t\in [0,1]}{\sup}||\frac{\partial}{\partial s}\alpha_{j}^{s}(tv_{j})||_{N}|v_{j}^{s}|_{N} \\
				&\leqslant (\widetilde\lambda +\frac{1-\widetilde\lambda}{2} ) \left|v_j\right|_{N}\\
				&\leqslant \frac{1+\widetilde\lambda}{2}  \eta \\
			\end{split}
		\end{align}
		
		Finally, combining the estimates (\ref{3.15}), (\ref{3.16}), we conclude that
		$$
		|P_{j+1}^{s}w_{j+1}|_{N}\leqslant |N_{1j}|_{N}+|N_{2j}|_{N}\leqslant\frac{1-\widetilde\lambda}{2}\eta+\frac{1+\widetilde\lambda}{2}  \eta=\eta.
		$$
	\end{proof}
	{\em Step 4.} Complete the proof of
	Theorem \ref{th:3.1}. 
	
	By Lemma \ref{lem:3.4} and Brouwer Fixed Point Theorem, there is a fixed point $ \boldsymbol{v}=\left\{v_j\right\}_{j=0}^{N_{k+1}} $ of $ \mathcal{A} $ in $ \mathcal{B}_{j}^{u}(\eta) $. To complete the proof of Theorem, it remains to verify that this fixed point satisfies the recurrence relation  $ v_{j+1}=G_{j}(v_{j}) $ for $ j=0,1,\cdots,N_{k+1}-1. $
	
	Since $ \boldsymbol{v} $ is a fixed point of the operator $ \mathcal{A} $, so equation (\ref{3.6}), (\ref{3.7}), (\ref{3.8})can be rewritten as:
	
	For $ j=0,N_{k+1}-1, $
	\begin{align}\label{3.17}
		P_0^{s}v_{0}=0, \quad
		P_{N_{k+1}}^{u}v_{N_{k+1}}=0.
	\end{align}
	
	For $ j=0,\cdot\cdot\cdot,N_{k+1}-1, $
	\begin{align}\label{3.18}
		P_{j+1}^{s}v_{j+1}=P_{j+1}^{s}(G_{j}(v_{j})).
	\end{align}
	
	For $ j=0,\cdot\cdot\cdot,N_{k+1}-1,$
	\begin{align}\label{3.19}
		P_{j}^{u}v_{j}=\Psi_{v_{j}}(P_{j+1}^{u}(-G_{j}(v_{j})+F_{j}(v_{j})-F_{j}(P_{j}^{s}v_{j})+v_{j+1})), \quad j=0,\cdot\cdot\cdot,N_{k+1}-1.
	\end{align}

	Applying the nonlinear operator $ \Phi_{v_{j}}=\Psi_{v_{j}}^{-1} $ to both sides of this last equation, obtain
	$$
	\Phi_{v_{j}}(P_{j}^{u}v_{j})=P_{j+1}^{u}(-G_{j}(v_{j})+F_{j}(v_{j})-F_{j}(P_{j}^{s}v_{j})+v_{j+1}).
	$$
	Recall the definition of $ \Phi_{v_{j}} $,
	$$
	\Phi_{v_{j}}(P_{j}^{u}v_{j})=P_{j+1}^{u}(F_{j}(P_{j}^{s}v_{j}+P_{j}^{u}v_{j})-F_{j}(P_{j}^{s}v_{j})).
	$$
	Comparing the last two equations and using the linearity of the projection $ P_{j+1}^{u} $ gives 
	$$ P_{j+1}^{u}(-G_{j}(v_{j})+v_{j+1})=0. $$
	Consequently,
	\begin{flalign}\label{3.20}
		\begin{split}
			P_{j+1}^{u}v_{j+1}= P_{j+1}^{u}(G_{j}(v_{j}))
		\end{split}
	\end{flalign}
	Equation (\ref{3.18}) already provides the stable component:
	$$
	P_{j+1}^{s}v_{j+1}=P_{j+1}^{s}(G_{j}(v_{j})).
	$$
	Together with (\ref{3.20}) we obtain the equality of the full vectors:
	\begin{equation}\label{3.21}
		v_{j+1}=G_{j}(v_{j}),\quad j=0,1,\cdots,N_{k+1}-1.
	\end{equation}
	Define $ x=\exp_{y_0}(v_0)$. From (\ref{3.21}) and the definition of the maps $ G_{j} $ it follows inductively that
	$$
	g^{j}x=\exp_{y_j}(v_j).
	$$
	Hence,
	$$
	\rho(g^{j} x,f^{j-N_{i}}x_i)=\rho(\exp_{y_j}(v_j),y_j)=||v_j||=l_{j}|v_j|_{N}\leqslant \eta \leqslant \varepsilon_1.
	$$
	which completes the proof of Theorem \ref{th:3.1}.
\end{proof}

\hfuzz=1pt
\section{Bi-Shadowing of Infinite Quasi-semi-hyperbolic Pseudo-orbit and Periodic Bi-Shadowing}\label{Section 4}

In this section, we will show that infinite quasi-semi-hyperbolic pseudoorbit has the  bi-shadowing property and periodic bi-shadowing property. 

\begin{theorem}\label{th:4.1}
	Let $f \in \operatorname{Diff}(M),0 < \lambda < 1 $, and let $TM=E^u \oplus E^s$ be a continuous splitting. For a given $ \varepsilon_1>0 $, then there exist three positive numbers   $\delta_0, \varepsilon_0$ and $d_0$ with the following property: For any $g \in \operatorname{Diff}(M)$ satisfying $ \|f - g\|_\infty\leqslant d \leqslant d_0 $, and for any  $(\lambda,\varepsilon,\delta)$-quasi-semi hyperbolic pseudo-orbit $  \{x_{i},n_{i}\}_{i=-\infty}^{\infty} $ of $ f $ with $ \delta\leqslant\delta_0,\varepsilon\leqslant\varepsilon_0 $, there exists a point $ x\in M $ that $ \varepsilon $-bi-shadows $ \{x_{i},n_{i}\}_{i=-\infty}^{\infty}$ i.e., $ \rho(g^j x,f^{j-N_{i}}x_{i})\leqslant\varepsilon_1 $ for $ N_{i}\leqslant j\leqslant N_{i+1}-1 $.
\end{theorem}

\begin{proof}
	For the given infinite $(\lambda,\varepsilon,\delta)$-quasi-semi hyperbolic pseudo-orbit $  \{x_{i},n_{i}\}_{i=-\infty}^{\infty} $, denote the points $  y_{j}=f^{j-N_{i}}x_{i}$ for $ N_{i}\leqslant j\leqslant N_{i+1}-1.$ Without loss of generality we may restrict our attention to indices $ i\in [0,k] $ for an arbitrarily large $ k $; the construction will be extended to all indices by a limiting argument. 
	
	For each fixed $ k $, consider the finite​ segment $$ \{y_{0}^{(k)},y_{1}^{(k)},\cdots,y_{N_{k+1}}^{(k)}\}\quad \text{with} \quad y_{j}^{(k)}=y_{j}\quad j=0,1,\cdots,N_{k+1}.$$  
	Given $\lambda \in (0,1)$, Theorem \ref{th:3.1}, whose proof was given earlier, guarantees the existence of constants $\delta_0, \varepsilon_0$ and $d_0$ such that the following holds: For any nonnegative numbers $ \delta\leqslant\delta_0,\varepsilon\leqslant\varepsilon_0,d\leqslant d_0,$ and any $g \in \operatorname{Diff}(M)$ satisfying $ \|f - g\|_\infty\leqslant d $, there exists a point $ \boldsymbol{v}^{(k)}=\left\{v_j^{(k)}\right\}_{j=0}^{N_{k+1}} $ of $ \mathcal{A} $ in $ \mathcal{B}_{j}^{u}(\eta) $ such that 
	\begin{equation}\label{4.1}v_{j+1}^{(k)} = G_j(v_j^{(k)}),\quad j=0,1,\cdots,N_{k+1}-1.\end{equation}
	
	Fix $j \in \mathbb{Z}$. For all sufficiently large $ k $, the points $ v_j^{(k)} $ are defined and belong to the bounded set $ \mathcal{B}_{j}^{u}(\eta) $. Hence the sequence $\{v_j^{(k)}\}_{k \in \mathbb{Z}}$ is bounded and thus, without loss of generality, by taking an appropriate subsequence and relabeling, suppose that it converges to a limit $v_j$. That is, $\lim_{k \to \infty} v_j^{(k)} = v_j.$
	
	Taking the limit $k \to \infty$ on both sides of (\ref{4.1}) and using the continuity of the maps $ G_j $, we obtain $v_{j+1}=G_j\left(v_j\right), j \in \mathbb{Z}$. Define $ x=\exp_{y_0}(v_0)$. By the definition of the maps $ G_{j} $ it follows inductively that
	$$
	g^{j}x=\exp_{y_j}(v_j).
	$$
	Hence,
	$$
	\rho(g^{j} x,f^{j-N_{i}}x_i)=\rho(\exp_{y_j}(v_j),y_j)=||v_j||=l_{j}|v_j|_{N}\leqslant \eta \leqslant \varepsilon_1.
	$$
	which completes the proof of Theorem \ref{th:3.1}.
\end{proof}

Periodic behavior is often of particular interest in dynamical systems. Here it is 
shown that bi-shadowing is preserved when attention is restricted to periodic trajectories, also often called cycles.

\begin{definition}
    A $(\lambda,\varepsilon,\delta)$-quasi-semi hyperbolic pseudo-orbit $  \{x_{i},n_{i}\}_{i=-\infty}^{\infty} $ of $ f $ is periodic, i.e., there exists an $ m, $ such that $ x_{i+m}=x_{i} $ and $ n_{i+m}=n_{i} $ for all $ i $. We say $ f $ has the periodic bi-shadowing property if the shadowing point $  x $ can be chosen to be periodic with period $ N_{m} $.
\end{definition}

\begin{theorem}\label{th:4.2}
	Let $f \in \operatorname{Diff}(M),0 < \lambda < 1 $, and let $TM=E^u \oplus E^s$ be a continuous splitting. For a given $ \varepsilon_1>0 $, then there exist three positive numbers   $\delta_0, \varepsilon_0$ and $d_0$ with the following property: For any $g \in \operatorname{Diff}(M)$ satisfying $ \|f - g\|_\infty\leqslant d \leqslant d_0 $, and for any periodic $(\lambda,\varepsilon,\delta)$-quasi-semi hyperbolic pseudo-orbit $  \{x_{i},n_{i}\}_{i=-\infty}^{\infty} $ of $ f $ with $ \delta\leqslant\delta_0,\varepsilon\leqslant\varepsilon_0,x_{i+m}=x_{i},n_{i+m}=n_{i},m>0 $, then $ f $ has the periodic bi-shadowing property, i.e., $$ \rho(g^j x,f^{j-N_{i}}x_{i})\leqslant\varepsilon_1\quad \text{and} \quad g^{N_{m}}x=x $$ for $ N_{i}\leqslant j\leqslant N_{i+1}-1 $.
\end{theorem}
\begin{proof}
	Assume that $  \{x_{i},n_{i}\}_{i=-\infty}^{\infty} $ is a periodic $(\lambda,\varepsilon,\delta)$-quasi-semi hyperbolic pseudo-orbit of $ f $ with $ \delta\leq\delta_0,\varepsilon\leq\varepsilon_0,x_{i+m}=x_{i},n_{i+m}=n_{i},m>0 $. Denote the points $  y_{j}=f^{j-N_{i}}x_{i}$ for $ N_{i}\leqslant j\leqslant N_{i+1}-1,$ then $y_{N_{m}}=y_0 $. 
	The assumptions for the proof below are essentially the same as those of Theorem \ref{th:3.1}. We construct the same operator $ \mathcal{A} $ on the space $$
	\mathcal{B}_{j}^{u}(\eta)=\{\{v_{j}\}_{j=0}^{N_{m}}\in E_{j}^{u}:|v_{j}|_{*}\leqslant\eta\}.
	$$
	but with one essential change: the boundary conditions​ are altered to reflect the periodicity.
	
	In (\ref{3.4}),	the boundary conditions for the operator $ \mathcal{A} :$
	$$
	P_{0}^{s}w_{0}=0 \quad \text{and} \quad P_{N_{k+1}}^{u}w_{N_{k+1}}=0
	$$
	For the periodic case we replace them by the periodic conditions 
	\begin{flalign}\label{4.2}
		\begin{split}
			P_{0}^{s}w_{0}=P_{N_{m}}^{s}w_{N_{m}} \quad \text{and} \quad P_{N_{m}}^{u}w_{N_{m}}=P_{0}^{u}w_{0}
		\end{split}
	\end{flalign}
	All other formulas defining $ \mathcal{A} $ remain exactly as in Theorem \ref{th:3.1}.
	Then, following a proof similar to that of Theorem \ref{th:3.1}, we obtain a fixed point  $ \boldsymbol{v}=\left\{v_j\right\}_{j=0}^{N_{m}} $ of $ \mathcal{A} $ in $ \mathcal{B}_{j}^{u}(\eta) $. Then (\ref{4.2}) can be written as 
	$$
	P_{0}^{s}v_{0}=P_{N_{m}}^{s}v_{N_{m}} \quad \text{and} \quad P_{N_{m}}^{u}v_{N_{m}}=P_{0}^{u}v_{0}
	$$
	combining 
	$$
	v_{0}=P_{0}^{s}v_{0}+P_{0}^{u}v_{0} \quad \text{and} \quad v_{N_{m}}=P_{N_{m}}^{u}v_{N_{m}}+P_{N_{m}}^{s}v_{N_{m}},
	$$
	we get $ v_{0}=v_{N_{m}} $. We now carry out the following steps, which closely follow the concluding argument of Theorem \ref{th:3.1}. Define $ x=\exp_{y_0}(v_0)$. Hence,
	$$
	g^{N_{m}}x=\exp_{y_{N_{m}}}(v_{N_{m}})=\exp_{y_0}(v_0)=x.
	$$ Thus $ x $ is a periodic point of $ g $ with period $ N_{m} $ which completes the proof of Theorem \ref{th:4.2}.
\end{proof}

\newpage
\addcontentsline{toc}{section}{References}

\end{document}